\let\mathcal\mathscr
\numberwithin{equation}{section}
\let\@@pmod\pmod
\DeclareRobustCommand{\pmod}{\@ifstar\@pmods\@@pmod}
\def\@pmods#1{\mkern4mu({\operator@font mod}\mkern 6mu#1)}
\newcommand{\ZZ}{\mathbb{Z}}
\newcommand{\TT}{\mathbb{T}}
\newcommand{\CC}{\mathbb{C}}
\newcommand{\cR}{\mathcal{R}}
\renewcommand{\leq}{\leqslant}
\renewcommand{\geq}{\geqslant}
\begin{document}

\title[A Roth-type theorem in the squares]{A transference approach to a Roth-type theorem in the squares}

\author{T.~D. Browning}
\address{School of Mathematics\\
University of Bristol\\ Bristol\\ BS8 1TW}
\email{t.d.browning@bristol.ac.uk}
\author{S.~M. Prendiville}
\address{Mathematical Institute\\
University of Oxford\\ Oxford\\ OX2 6GG}
\email{sean.prendiville@maths.ox.ac.uk}

\subjclass[2010]{11B30 (11D09, 11P55)}

\date{\today}

\begin{abstract}
We show that any subset of the squares of positive relative upper density contains non-trivial solutions to a translation-invariant 
linear equation in five or more variables, with explicit quantitative bounds.  As a consequence, we establish the partition regularity of any diagonal quadric in five or more variables whose coefficients sum to zero. 
Unlike previous approaches, which are limited to equations in seven or more variables, we employ  transference technology of Green
to import  bounds from the linear setting.  
\end{abstract}

\maketitle

\setcounter{tocdepth}{1}
\tableofcontents
\thispagestyle{empty}

\section{Introduction}

Initiated by Roth \cite{roth53, roth53II}, there has been much work (see \cite{bloom12, schoensisask}) on bounding the density of subsets of integers lacking solutions to a single linear equation
\begin{equation}\label{MainEqn}
c_1y_1 + \dots + c_s y_s = 0 ,
\end{equation}
with integer coefficients.
It is customary to impose the condition that $c_1 + \dots + c_s = 0$ in order to avoid the existence of a congruence class lacking solutions.  Our knowledge is more formative for the analogous non-linear equation
\begin{equation}\label{bustard}
c_1x_1^2 + \dots + c_s x_s^2 = 0 ,
\end{equation}
where the aim is to show that sufficiently dense 
subsets of integers have generic solutions. 
Given a union $K$ of $k$ proper subspaces of the hyperplane \eqref{MainEqn}, 
our notion of 
  non-generic solutions
is captured by $K$-triviality,   
  where
a solution $(x_1,\dots,x_s)$ is said to be  {\em $K$-trivial} if  $(x_1^2, \dots, x_s^2) \in K$.
For instance, one could regard solutions with all $x_i$ equal as $K$-trivial, by taking $K$ to be the diagonal subspace
\begin{equation}\label{diagonal}
K := \set{(y, \dots, y) : y \in \Q}, 
\end{equation}
for which $k = 1$.   More generally, taking
$$
 K := \bigcup_{i \neq j} \set{\vy \in \Q^s : y_i = y_j \text{ and } \vc \cdot \vy = 0}, 
$$
we have $k = \binom{s}{2}$ and the $K$-nontrivial solutions correspond to those in which all $x_i$ are distinct.

All work on  equation \eqref{bustard} (see \cite{smith, 
keil14, keilpreprint, henriot}) is based on an adaptation of the density increment approach to Roth's theorem, and is at present limited to equations in at least $s \geq 7$ variables and with at least two positive and two negative coefficients.  
Adapting the transference technology of Green \cite{greenprimes}, we offer an alternative approach to this problem, an approach which is applicable to a wider class of equations, in particular those in  $s \geq 5$ variables and with no sign restriction on the coefficients.

\begin{theorem}\label{thm:main} 
Let  $c_1, \dots, c_s  \in \Z\setminus\set{0}$ with $c_1+\dots + c_s = 0$ and $s \geq 5$.  Let $K$ denote a union of at most $k$ proper subspaces of the hyperplane \eqref{MainEqn} each containing the 
diagonal subspace \eqref{diagonal}. 
If $A \subset [X]$ is such that the only solutions to 
\begin{equation}\label{eqn:heron}
c_1x_1^2 + \dots + c_s x_s^2 = 0 \qquad (x_i \in A), 
\end{equation} 
are $K$-trivial, then for any $\eps > 0$ we have
$$
|A| \ll_{\vc, k, \eps}  \frac{X}{(\log\log\log X)^{\frac{s}{2} -1 -\eps}}.
$$
\end{theorem}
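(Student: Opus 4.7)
The plan is to import the transference technology of Green~\cite{greenprimes}, reducing the quadratic equation \eqref{eqn:heron} to a translation-invariant linear equation on a sparse subset of the integers. Setting $B := \set{a^2 : a \in A}$ and $N = X^2$, solutions to \eqref{eqn:heron} correspond precisely to solutions of \eqref{MainEqn} in $B^s$; the set $B$ has relative density $\delta \asymp |A|/X$ inside the set $\mathcal{S}\cap[N]$ of squares up to $N$. It therefore suffices to prove a density-Roth theorem for subsets of the squares with explicit quantitative bounds.

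I would first apply the $W$-trick: for $W = \prod_{p \le w} p$ with $w$ a slowly growing parameter, pigeonhole $A$ into a residue class modulo a suitable power of $W$ to remove local obstructions. Next I would construct a pseudorandom majorant $\nu$ on $\Z/N'\Z$ (for some prime $N' \asymp X^2$) that dominates the normalized indicator of $B$ and satisfies the linear forms condition for the system underlying \eqref{MainEqn}; the natural candidate is a Selberg-type majorant for the squareroots, or equivalently the major-arc model of the squares. In parallel I would establish a Bourgain-type restriction estimate for $B$. These ingredients drive Green's transference principle, which decomposes the normalized indicator $\delta^{-1}\mathbf{1}_B$ as $f_1 + f_2$ with $f_1 \colon \Z/N'\Z \to [0,1]$ a bounded model function of density $\asymp \delta$ and $f_2$ negligible in the Fourier norm controlled by the restriction estimate. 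Feeding $f_1$ into the best available quantitative Roth-type bound for translation-invariant equations in $s \ge 5$ variables (as in~\cite{schoensisask}) then produces many solutions in $f_1$, which transfer back to $B$; the three iterated logarithms in the statement arise from the input Roth bound, the transference step, and the $W$-trick.

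The main technical obstacle is the verification of the linear forms condition for $\nu$, which reduces to counting solutions of systems of simultaneous diagonal quadratic equations with shifts. For $s = 5$ these counts sit near the boundary of what the classical circle method delivers, and this is where the hypothesis $s \ge 5$ enters the argument. A secondary but nontrivial point is that the transferred count of solutions to \eqref{MainEqn} in $B^s$ must dominate the $K$-trivial contribution, which is of order at most $k \cdot |A|^{s-2}$. Here the assumption that each component of $K$ is a \emph{proper} subspace of the hyperplane \eqref{MainEqn} is essential: it forces the $K$-trivial configurations to lie on an affine subspace of dimension at most $s-2$, strictly less than the $(s-1)$-dimensional space of all solutions, so the main term from the transferred count dominates provided $\delta$ exceeds a threshold depending on $k$ and $\vc$.
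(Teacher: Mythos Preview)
Your overall architecture---$W$-trick, pseudorandom majorant for the squares, transference to a bounded model, then invoke a quantitative Roth bound---matches the paper's. But you misidentify where the constraint $s\ge 5$ enters, and this matters for execution.

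The paper does \emph{not} verify a linear forms condition. For a single translation-invariant linear equation, Green's transference needs only the Fourier decay hypothesis $\|\hat\nu-\hat 1_{[N]}\|_\infty\le\theta N$, which is established by a routine major/minor arc analysis of the $W$-tricked quadratic Weyl sum (Lemma~\ref{eq:bounded}); this step is independent of $s$, so your claim that $s=5$ sits ``near the boundary of what the classical circle method delivers'' for the pseudorandomness verification is misplaced. The constraint $s\ge 5$ enters instead through the \emph{restriction estimate}. Having produced a bounded approximant $g$ with $\|\hat f-\hat g\|_\infty$ small, one must compare $\sum_{\vc\cdot\vx=0}\prod_i f(x_i)$ with $\sum_{\vc\cdot\vx=0}\prod_i g(x_i)$ (Lemma~\ref{lem:configcontrol}); writing the difference as an integral over $\T$ and applying H\"older requires $\|\hat\phi\|_p^p\ll N^{p-1}$ for all $|\phi|\le\nu$ at some exponent $p<s$. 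Bourgain's restriction theorem for the squares, adapted in \S\ref{sec:restriction} to the $W$-tricked majorant, delivers this for any $p>4$ and no better is known; hence $s\ge 5$. You mention the restriction estimate only ``in parallel'' without assigning it this role.

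A secondary gap: your $K$-trivial bound $k\cdot|A|^{s-2}$ is both too crude and in the wrong currency. What is needed is that the $\nu$-\emph{weighted} count $\sum_{\vn\in K}\prod_i\nu(n_i)$ is $O(N^{s-1-\eta})$ for some fixed $\eta>0$ (Definition~\ref{saves eta}); the paper obtains $\eta=1/5$ via Pila's bound on integral points on the codimension-two variety cut out by each component of $K$ (Lemma~\ref{PilaBound} and Corollary~\ref{cor:8.2}).
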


Let  $c_1, \dots, c_s$ be as in the statement of Theorem \ref{thm:main}.
Then this result implies that any  subset $A\subset \N$,
with  positive upper density 
\begin{equation}\label{upperdensitydef}
\limsup_{X \to \infty} \frac{|A\cap [X]|}{X}>0,
\end{equation}
must contain generic 
solutions to \eqref{eqn:heron}.

A Diophantine equation $f(x_1, \dots, x_s)  = 0 $ is called \emph{partition regular} if, given any partition of $\N$ into finitely many colour classes, at least one class contains a solution to the equation in which all $x_i$ are distinct.  An old problem of Erd\H{o}s and Graham \cite{erdosgraham} asks if the Pythagorean equation $x^2 + y^2 = z^2$ is partition regular.  Graham \cite{graham} has offered \$250 for a resolution of this conjecture.  Similarly, it is unknown if the equation $x^2 + y^2 = 2z^2$ is partition regular (see \cite{frantzikinakishost}).  Given an extra two variables, we can answer questions of the latter type.

\begin{corollary}
Let  $c_1, \dots, c_s  \in \Z\setminus\set{0}$ with $c_1+\dots + c_s = 0$ and $s \geq 5$.
Then the equation 
$
c_1 x_1^2 + \dots + c_sx_s^2 = 0
$ is partition regular.  
\end{corollary}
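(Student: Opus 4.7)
The plan is to deduce the corollary from Theorem \ref{thm:main} by a standard pigeonhole argument converting the finite-colouring hypothesis into a positive-density statement. Given any partition $\N = C_1 \cup \dots \cup C_r$, summing over colour classes yields $\sum_{i=1}^r |C_i \cap [X]| = X$ for every $X$, so pigeonhole produces an index $i(X)$ with $|C_{i(X)} \cap [X]| \geq X/r$. Since there are only finitely many colours, one value $i_0$ must recur for infinitely many $X$, and taking $A := C_{i_0}$ gives $\limsup_{X \to \infty} |A \cap [X]|/X \geq 1/r > 0$.

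Next I would choose $K$ so that $K$-nontriviality corresponds precisely to pairwise distinctness. The natural choice is
$$
K := \bigcup_{i \neq j} \set{\vy \in \Q^s : y_i = y_j \text{ and } \vc \cdot \vy = 0},
$$
a union of $\binom{s}{2}$ proper subspaces of the hyperplane \eqref{MainEqn}, each containing the diagonal subspace \eqref{diagonal} (thanks to the hypothesis $c_1 + \dots + c_s = 0$). Thus Theorem \ref{thm:main} applies with $k = \binom{s}{2}$. Since elements of $A \subset \N$ are positive, $x_i^2 = x_j^2$ is equivalent to $x_i = x_j$, so a $K$-nontrivial solution to \eqref{eqn:heron} is precisely one in which all the $x_i$ are distinct.

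To conclude, fix any $\eps \in (0, s/2 - 1)$, which is possible since $s \geq 5$, and suppose for contradiction that $A$ contains no solution to \eqref{eqn:heron} with all $x_i$ distinct. Theorem \ref{thm:main} then yields $|A \cap [X]| \ll_{\vc, k, \eps} X/(\log\log\log X)^{s/2 - 1 - \eps}$ for every $X$, which is $o(X)$ and therefore incompatible with the lower bound $|A \cap [X]| \geq X/r$ available for infinitely many $X$. This contradiction furnishes the required monochromatic solution. Since the deduction is a direct appeal to the main theorem, no substantial obstacle is anticipated; the only point meriting care is verifying that $K$-nontriviality matches pairwise distinctness, which is immediate over the positive integers.
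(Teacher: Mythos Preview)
Your proposal is correct and follows essentially the same approach as the paper: one colour class has positive upper density by pigeonhole, and then Theorem \ref{thm:main} (with the choice of $K$ you specify, which the paper also identifies in the introduction) forces a solution with all $x_i$ distinct. The paper's proof is a one-line appeal to this reasoning; you have simply spelled out the details.
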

This follows from Theorem \ref{thm:main}, 
since at least one colour class must have positive upper density \eqref{upperdensitydef}.  

Define the \emph{Rado number} (see \cite[p.103]{ramseytheory}) of the equation \eqref{eqn:heron} to be the smallest positive integer $R_{\vc}(r)$ such that any $r$-colouring of the interval $\set{1, 2, \dots, R_{\vc}(r)}$ results in at least one monochromatic tuple $(x_1, \dots, x_s)$ satisfying \eqref{eqn:heron} with all $x_i$ distinct.  Since any $r$-colouring of $[X]$ results in a colour class of size at least $X/r$, one can use the estimate given in Theorem \ref{thm:main} to deduce the following result.
\begin{corollary}
Let  $c_1, \dots, c_s  \in \Z\setminus\set{0}$ with $c_1+\dots + c_s = 0$ and $s \geq 5$.  Then there exists a constant $C_{\vc}>0$, depending only on $c_1,\dots,c_s$, such that
$$
R_{\vc}(r) \leq \exp\exp\exp(C_{\vc} r)
$$
\end{corollary}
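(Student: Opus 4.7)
The plan is to invoke Theorem \ref{thm:main} with an appropriate choice of $K$ after a pigeonhole step. Set $X := R_{\vc}(r) - 1$, so that there exists an $r$-colouring of $[X]$ with no monochromatic solution to \eqref{eqn:heron} in distinct variables. By pigeonhole, at least one colour class $A \subseteq [X]$ satisfies $|A| \geq X/r$.

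The next step is to encode the constraint ``all $x_i$ distinct'' as $K$-triviality. Define
$$
K := \bigcup_{1 \leq i < j \leq s} \set{\vy \in \Q^s : y_i = y_j \text{ and } \vc \cdot \vy = 0}.
$$
This is a union of $k = \binom{s}{2}$ proper subspaces of the hyperplane \eqref{MainEqn}, each of which contains the diagonal \eqref{diagonal} because $c_1 + \dots + c_s = 0$. Crucially, since elements of $A$ are positive integers, the implication $x_i^2 = x_j^2 \Rightarrow x_i = x_j$ holds, so any solution $(x_1,\dots,x_s) \in A^s$ to \eqref{eqn:heron} must be $K$-trivial (otherwise all $x_i$ would be distinct, contradicting our choice of colouring).

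With this choice of $K$, Theorem \ref{thm:main} applies to $A$, yielding
$$
\frac{X}{r} \leq |A| \ll_{\vc, \eps} \frac{X}{(\log\log\log X)^{\frac{s}{2} - 1 - \eps}}
$$
for any $\eps > 0$; note that $k = \binom{s}{2}$ is absorbed into the dependence on $\vc$. Rearranging and fixing, say, $\eps = \tfrac14$, gives
$$
(\log\log\log X)^{\frac{s}{2} - 1 - \eps} \ll_{\vc} r.
$$
Since $s \geq 5$, the exponent $\tfrac{s}{2} - 1 - \eps$ is at least $\tfrac{5}{4}$, and in particular is positive, so this inequality bounds $\log\log\log X$ by a power of $r$, hence
$$
R_{\vc}(r) = X + 1 \leq \exp\exp\exp(C_{\vc} r)
$$
for a suitable constant $C_{\vc} > 0$.

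There is no real obstacle here beyond bookkeeping; the only point requiring minor care is the verification that the subspaces cutting out ``$y_i = y_j$'' on the hyperplane are proper and contain the diagonal, which ensures Theorem \ref{thm:main} can be invoked with a $k$ depending only on $s$, and hence on $\vc$.
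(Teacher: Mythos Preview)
Your proof is correct and follows exactly the approach the paper indicates: the paper's entire argument is the one-line observation that ``any $r$-colouring of $[X]$ results in a colour class of size at least $X/r$, one can use the estimate given in Theorem~\ref{thm:main}'', and you have simply supplied the routine details (choice of $K$, verification that non-distinct solutions are precisely the $K$-trivial ones, and the final arithmetic). The only cosmetic remark is that, as written, setting $X = R_{\vc}(r)-1$ presupposes $R_{\vc}(r)$ is finite; but your inequality $(\log\log\log X)^{5/4} \ll_{\vc} r$ for every such $X$ already forces this, so no change is needed.
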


The study of density bounds for sets lacking solutions to \eqref{eqn:heron} was initiated by Smith \cite{smith} who proceeded by considering instead  the system of equations
\begin{equation}\label{system}
\begin{split}
c_1x_1^2 + \dots + c_s x_s^2 & = 0, \\
c_1x_1+ \dots + c_s x_s & = 0 .
\end{split}
\end{equation}
The advantage of this system is that it is translation-invariant, in that the set of solutions remains unchanged under a shift from the diagonal subspace \eqref{diagonal}.  This allows one to follow Roth's original approach to the equation \eqref{MainEqn}, using an $L^p$-estimate (mean value) to deduce that if a set $A$ lacks solutions to \eqref{system}, then there is an exponential sum associated to $A$ which is large.  It follows that $A$ has a density increment on a long progression of the form $a + q \cdot [N']$, and so (by translation-invariance) there exists a denser set $A' \subset [N']$ lacking solutions to \eqref{system}.  By iterating this argument, a bound can eventually be extracted on the density of the original set $A$. 

Smith \cite{smith} accomplished this programme for equations in 9 or more variables, obtaining a bound of the form $|A| \ll X(\log\log X)^{-c}$ for some small $c >0$.  This small exponent resulted from a use of quadratic Fourier analysis, a tool that Keil \cite{keil14} avoided, obtaining a bound of the form $X(\log\log X)^{-1/15}$.  Moreover, Keil incorporated a restriction $L^p$-estimate in order to handle equations in seven or more variables.
Subsequently, Henriot \cite{henriot} has improved the cardinality bound to $X(\log X)^{-c}$ for 
$s\geq 7$ and an exponent  $c > 0$
that depends on the choice of coefficients $c_1,\dots,c_s$.

In all these approaches, one requires that there are at least two positive and two negative coefficients in order to guarantee that the system \eqref{system} has a non-diagonal real solution.  Furthermore, $s \geq 7$ is a natural limit of the analytic method, as for $s \leq 6$ the system may possess more solutions than that predicted by the usual probabilistic heuristic.  However, if one is only interested in sets of integers lacking solutions to the top equation \eqref{eqn:heron}, then neither of these conditions should be necessary.

Analysing the single equation \eqref{eqn:heron} removes translation-invariance, making a density increment approach more problematic.  Instead we implement the transference principle devised in Green \cite{greenprimes} to import results from the dense linear setting to the sparse linear setting, viewing a dense subset of the squares as a sparse subset of the integers.  Although Roth's theorem only holds for dense subsets of integers, we show that an appropriately normalised indicator function of the squares has sufficient pseudorandomness properties that any dense subset of squares can be modelled by a dense subset of integers.  Applying Roth's theorem in the dense setting, we are then able to transfer back to the sparse setting to bound the size of our set of squares.  We describe this method further in \S 
\ref{s:2}.

It is worth remarking that Gowers' quantitative version of Szemer\'edi's theorem \cite{gowers01} yields bounds for sets lacking solutions to very many homogeneous Diophantine equations, an observation the second author learnt from Trevor Wooley.  
(A variant of this idea was put to use by Br\"udern et al \cite{bruedernetal}.)  For the sake of completeness, we include a proof of the folklore argument.
\begin{proposition}\label{SZcheat}
Let $F \in \Z[x_1, \dots, x_s]$ be a homogeneous polynomial with integer coefficients and let $K$ denote a finite union of proper subspaces of $\Q^s$, one of which is equal to the diagonal subspace \eqref{diagonal}.  Suppose that the solution set $F(x_1, \dots, x_s) = 0$ contains a two-dimensional rational subspace which contains the diagonal \eqref{diagonal} and which is not contained in $K$.  Then any set $A\subset [X]$ which satisfies the implication
$$
F(\vx) = 0 \quad (\vx \in A^s) \qquad \implies \qquad \vx \in K,
$$
has cardinality
$$
|A| \ll \frac{X}{(\log \log X)^c},
$$
for some positive constant $c = c(F) > 0$.
\end{proposition}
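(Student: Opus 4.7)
The plan is to reduce the search for $K$-nontrivial solutions inside $A$ to the search for long arithmetic progressions in $A$, and then invoke Gowers's quantitative Szemer\'edi theorem.

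First I parameterise the given two-dimensional subspace $V \subset F^{-1}(0)$.  Since $V$ contains the diagonal \eqref{diagonal}, I can write $V = \Q \mathbf{1} + \Q \vv$, where $\mathbf{1} = (1,\dots,1)$ and $\vv = (v_1, \dots, v_s) \in \Z^s$.  The coordinates $v_i$ are not all equal (as $V$ strictly contains the diagonal), and after replacing $\vv$ by $\vv - (\min_i v_i)\mathbf{1}$, which is a valid change of basis for $V$, I may assume that $v_i \in \{0, 1, \dots, W-1\}$ with $W = 1 + \max_i v_i \geq 2$.  For every $(y, d) \in \Z^2$ the tuple $\vx(y, d) := (y + dv_1, \dots, y + dv_s)$ lies in $V$ and hence solves $F = 0$; further, $\vx(y,d)$ lies in $A^s$ as soon as the $W$-term progression $\{y, y+d, \dots, y + (W-1)d\}$ is contained in $A$.

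Next I analyse the $K$-trivial locus inside $V$.  Since $V \not\subset K$ and each constituent of $K$ is a proper subspace of $\Q^s$, the intersection $V \cap K$ is contained in a finite union of lines $\ell_1 \cup \dots \cup \ell_r$ through the origin of $V$, one of which (coming from the diagonal component of $K$) is $\{d = 0\}$.  In the $(y, d)$ chart each $\ell_j$ is cut out by a single linear equation $\alpha_j y + \beta_j d = 0$, so for each fixed $d \neq 0$ at most $r = O_F(1)$ values of $y$ yield a $K$-trivial $\vx(y, d)$.  Consequently the number of pairs $(y, d)$ with $1 \leq d$ and $y + (W-1)d \leq X$ producing $K$-trivial solutions is $O_F(X)$.

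To finish I quote Gowers's theorem \cite{gowers01} in the form: there exists $c_W > 0$ such that any $A \subset [X]$ with $|A| \geq X(\log\log X)^{-c_W}$ contains at least one $W$-term arithmetic progression, and a Varnavides-type averaging upgrades this to $\gg_W X^2 (\log\log X)^{-O_W(1)}$ such progressions of positive common difference.  For $X$ large enough in terms of $F$, this quadratic count swamps the linear bound $O_F(X)$ on $K$-trivial pairs, so at least one remaining pair $(y, d)$ produces a $K$-nontrivial solution $\vx(y, d) \in A^s$ to $F(\vx) = 0$.  This contradicts the hypothesis on $A$, and the contrapositive yields $|A| \ll_F X/(\log\log X)^c$ for some $c = c(F) > 0$.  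No step is technically delicate; the only conceptual point is recognising that a two-dimensional rational subspace of $F^{-1}(0)$ through the diagonal encodes an affine-linear configuration embeddable inside any arithmetic progression of length $W$, so that standard Szemer\'edi-type bounds apply directly.
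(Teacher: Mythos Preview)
Your proof is correct and shares the paper's core idea: write $V = \Q\mathbf{1} + \Q\vv$ with the coordinates of $\vv$ in $\{0,\dots,W-1\}$, so that any $W$-term progression in $A$ yields a solution in $V \cap A^s$, and then appeal to Gowers. The paper, however, avoids the Varnavides step entirely: having fixed $\vx \in V \setminus K$ with coordinates in $[0,k)$, it argues that a \emph{single} $k$-progression $a, a+q, \dots$ already gives a $K$-nontrivial solution, because $\mathbf{y} = a\mathbf{1} + q\vx \in K$ would force $\vx = (\mathbf{y} - a\mathbf{1})/q \in K$. Your counting route is a little longer but actually more robust under the stated hypothesis: the paper's implication $\mathbf{y}\in K\Rightarrow\vx\in K$ tacitly uses that every component $K_j$ of $K$ contains $\mathbf{1}$ (so that $K_j$ is closed under subtracting $a\mathbf{1}$), whereas the proposition only assumes that \emph{one} component equals the diagonal; your line-counting in $V$ handles the general case cleanly. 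One quibble: Varnavides combined with Gowers's bound does not produce $\gg_W X^2(\log\log X)^{-O_W(1)}$ progressions at the threshold density---the honest lower bound is $X^2$ divided by a double exponential in $\delta^{-O(1)}$---but since any count of order $\omega(X)$ already swamps your $O(X)$ trivial pairs, this misstatement does not affect the argument. (Also, your constants $r$ and the final $c$ depend on $K$ as well as $F$.)
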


\begin{proof}
Let $V$ denote a two-dimensional rational subspace of the solution set $F(\vx) = 0$, which contains the diagonal \eqref{diagonal} and is not contained in $K$.  Since $(1, \dots, 1) \in V\cap K$ and $\dim V = 2$, there exists $\vx \in V\setminus K$ which is linearly independent of $(1, \dots, 1)$.  Clearing denominators and shifting along the diagonal subspace by a sufficiently large integer, we may assume that $\vx \in ([0, k)\cap \Z)^s$ for some $k \in \N$.  

Notice that for any $a, q \in \Z$ the tuple $\mathbf{y}=(a + q x_1, \dots, a+qx_s)$ lies in $V$, so that $F(\mathbf{y}) = 0.$  Suppose now that  $A$ contains an arithmetic progression $a, a+q ,\dots, a+ (k-1)q$ of length $k$. Then it follows that  $A^s$ contains $\mathbf{y}$, which is also a solution to $F = 0$.  Moreover, this solution is not contained in $K$, since otherwise 
$$
\vx = \frac{\mathbf{y} - (a, \dots, a)}{q} \in K.
$$  
Thus $A$ cannot contain an arithmetic progression of length $k$, so that 
$
|A| \ll X(\log \log X)^{-c_k},
$
for some $c_k > 0$, 
by Gowers's bounds in Szemer\'edi's theorem \cite{gowers01}.
\end{proof}

This argument covers those equations of the form \eqref{eqn:heron} tackled by Smith, Keil and Henriot, namely those in which 
$c_1 + \dots + c_s = 0$ 
for $s \geq 7$,
and 
where
there are at least two positive and two negative coefficients.  
Indeed, an application of the Hardy--Littlewood method (as carried out by Keil \cite{keil14}) 
gives  an integer vector $\vx \in \Z^s\setminus K$ satisfying the system \eqref{system}
under these conditions.   Expanding the square, one can check that for any $\lambda, \mu \in \Q$ the vector $(\lambda + \mu x_1, \dots, \lambda + \mu x_s)$ also satisfies \eqref{system}.  In particular, the solution set of \eqref{eqn:heron} contains the two dimensional subspace generated by the diagonal and $\vx$, a subspace which is not contained in $K$.

Unlike the bounds obtained via Proposition \ref{SZcheat}, the advantage of the bounds of Smith and Keil is that their logarithmic exponent is \emph{uniform} in the choice of coefficients, whilst Henriot succeeds in replacing the double logarithm derived from Gowers's bounds by a single logarithm.

Note that one 
 cannot use Proposition \ref{SZcheat} to treat all equations covered by Theorem \ref{thm:main}.  For instance, the convex equation
$$
x_1^2 + x_2^2 + x_3^2 + x_{4}^2 = 4x_5^2
$$
is of the form \eqref{eqn:heron} but is not satisfied by a two dimensional subspace containing the diagonal.  Should the solution set of this equation contain such a set, then this subspace can be written in the form
\begin{equation}\label{twodim}
\set{(\lambda, \dots, \lambda) + \mu \vx : \lambda,\mu \in \Q}.
\end{equation}
Expanding the squares in the equation $\sum_i (\lambda + \mu x_i)^2 = 4(\lambda + \mu x_5)^2$ reveals that $\vx$ also satisfies the system \eqref{system}.  Substituting the linear equation into the quadratic equation shows that 
$$
x_1^2 + x_2^2 + x_3^2+ x_{4}^2 = 4(x_1 +x_2+ x_3+ x_{4})^2, 
$$
which forces $\vx$ to be an element of the diagonal, contradicting the fact that \eqref{twodim} is two dimensional.

Finally we remark that in all likelihood our methods can be adapted to handle diagonal equations of degree $k$ in $s$ variables, provided that one has a restriction estimate at exponent $p<s$ for the appropriate exponential sum.  This gives an alternative approach to recent results of Henriot \cite{henriothigherpowers}, albeit with much weaker density bounds.  Whereas Henriot's methods rely on a restriction estimate associated to the Vinogradov system
\begin{align*}
x_1^k + \dots + x_s^k & = y_1^k + \dots + y_s^k\\
&\hspace{0.2cm}\vdots\\
x_1 + \dots + x_s & = y_1 + \dots + y_s,
\end{align*}
 the transference approach of this paper would merely  require a mean value estimate for the top equation in the system.  For small values of $k$, it appears that the transference approach may  allow one to improve the number of variables required in Henriot's work.  
 For instance, an eighth moment estimate of Vaughan \cite{vaughancubes} should yield the partition regularity of a diagonal  cubic equation in 9 or more variables, whereas the work of Henriot \cite{henriothigherpowers} requires at least 13 
 variables when specified to this situation.

\subsection*{Acknowledgements} 
We are very grateful to Thomas Bloom, Sam Chow, Surya Ramana and Trevor Wooley for a number of useful comments.
Special thanks are due to the anonymous referees for several helpful suggestions that have improved the exposition of the paper.
Whilst working on this paper the authors were 
supported by ERC grant \texttt{306457}.  

\section{The structure of our argument}\label{s:2}

\begin{definition}[Majorant]
A  \emph{majorant on $[N]$} is a non-negative function $\nu : \Z \to [0, \infty)$ with support contained in the interval $[N]$.
\end{definition}
The next definition, and much of this section, follows the exposition of Tao \cite[\S 1.7]{taohigherorder}.

\begin{definition}[$\vc$-pseudorandom]
Let $\vc=(c_1, \dots, c_s) \in\Z^s$. We say that a majorant $\nu$ is \emph{$\vc$-pseudorandom} if for any $\delta > 0$ there exists $c(\delta) > 0$ such that for any $0 \leq f \leq \nu$ we have the implication
$$
 \sum_x f(x) \geq \delta \sum_x \nu(x) \quad \implies \quad \sum_{\vc\cdot\vx = 0} \prod_{i=1}^s f(x_i) \geq c(\delta) \sum_{\vc\cdot \vx = 0}\prod_{i=1}^s \nu(x_i).
$$
\end{definition}

The indicator function of an interval provides an important example of a $\vc$-pseudorandom majorant, 
as can be seen from the following result of Bloom \cite{bloom12} (building on work of Sanders \cite{sanders}).

\newtheorem*{bloom}{Bloom's theorem}
\begin{bloom}[\cite{bloom12}]
Let $c_1,\dots,c_s\in \Z \setminus\set{0}$ with $s \geq 3$ and $c_1 + \dots + c_s = 0$. Then the indicator function $1_{[N]}$ is $\vc$-pseudorandom with quantitative dependence 
$$
c(\delta) \gg_{\vc} \exp\brac{-C/\delta^{\frac{1}{s-2-\eps}}}
$$
for some absolute constant $C = C(s, \eps)$ and any $\eps > 0$.
\end{bloom}

Our aim is to show that there exists a function related to the indicator function of the squares which is also $\vc$-pseudorandom, at least when $s \geq 5$.  There are three conditions which, taken together, are sufficient to guarantee that a majorant is $\vc$-pseudorandom. We discuss these in the remainder of this section.

Define the $L^p$-norm of a function $f : \Z \to \C$ with respect to counting measure, so for example
$$
\norm{f}_1 := \sum_n |f(n)|.
$$
Provided that this $L^1$-norm is finite, we define the Fourier transform of $f$ at $\alpha \in \T := \R/\Z$ by 
\begin{equation}\label{FourierDef}
\hat{f}(\alpha) := \sum_n f(n) e(\alpha n).
\end{equation}
For functions on $\T$, all $L^p$-norms are taken with respect to the Haar probability measure, so that
$
\bignorm{\hat{f}}_2 = \norm{f}_2.
$

\begin{definition}[Fourier decay] We say that a majorant $\nu$ has \emph{Fourier decay of level $\theta$} if
$$
\norm{\hat{\nu} - \hat{1}_{[N]}}_\infty \leq \theta N.
$$
\end{definition}

The relevance of this definition is demonstrated in work of Green \cite{greenprimes}.  The idea is that one can approximate an unbounded function by a bounded function, provided the unbounded function has a majorant with sufficient Fourier decay.  For a proof of the following form of the bounded approximation lemma we refer the reader to the second author's survey \cite{boundedapproximation}.

\begin{lemma}[Bounded approximation]
Suppose that the majorant $\nu$ has Fourier decay of level $\theta$.  Then for any $0 \leq f \leq \nu$ there exists a bounded function $0 \leq g \leq 1_{[N]}$ such that
$$
\bignorm{\hat{f} - \hat{g}}_{\infty} \ll \frac{N}{\log(1/\theta)^{3/2}}.
$$
\end{lemma}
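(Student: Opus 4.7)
The plan is to build $g$ as a rescaled smoothing of $f$ against a positive kernel of width $M$, where $M$ is a parameter to be chosen in terms of $\theta$ and $L := \log(1/\theta)$. Take $\mu := (1_I \ast 1_I)/M^2$ with $I = [M]$, a Fej\'er-type kernel supported in $[0, 2M]$ satisfying $\hat{\mu} \geq 0$ and $\hat{\mu}(0) = 1$. Set $g_0 := f \ast \mu$, so that $\hat{g}_0 = \hat{f}\,\hat{\mu}$; after a mild rescaling and truncation to enforce pointwise boundedness, this will be the candidate $g$.

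For the pointwise bound, since $0 \leq f \leq \nu$ we have $g_0 \leq \nu \ast \mu$. Decomposing
\begin{equation*}
\nu\ast\mu = 1_{[N]}\ast\mu + (\nu - 1_{[N]})\ast\mu ,
\end{equation*}
the first summand lies in $[0,1]$ (since $\mu$ is a probability kernel), while the second is controlled by Fourier inversion together with the Fourier decay hypothesis:
\begin{equation*}
\bigl|\bigl((\nu-1_{[N]})\ast\mu\bigr)(x)\bigr|
\leq \bignorm{\hat{\nu}-\hat{1}_{[N]}}_\infty \bignorm{\hat{\mu}}_1
\ll \theta N/M,
\end{equation*}
using that $\hat{\mu} \geq 0$ forces $\bignorm{\hat{\mu}}_1 = \mu(0) \ll 1/M$. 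Hence $g_0 \leq 1 + O(\theta N/M)$, and for $M \gg \theta N$ a small rescaling $g := g_0/(1 + C\theta N/M)$ produces a function in $[0, 1_{[N']}]$ with $N' = N+2M$; the extra $2M$ is negligible since $M \ll N$.

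For the Fourier bound, write $\hat{f} - \hat{g}_0 = \hat{f}(1-\hat{\mu})$ and split according to the size of $\|\alpha\|$. In the low-frequency range $\|\alpha\| \leq \eta$ with $\eta \asymp 1/(M L^{3/4})$, Taylor-expanding the Fej\'er kernel at the origin gives $|1-\hat{\mu}(\alpha)| \ll (M\|\alpha\|)^2$, and combining with the trivial bound $|\hat{f}(\alpha)| \leq \hat{f}(0) \ll N$ yields a contribution $\ll N(M\eta)^2 \ll N/L^{3/2}$. In the high-frequency range $\|\alpha\| > \eta$, one has to control $|\hat{f}(\alpha)(1-\hat{\mu}(\alpha))|$ directly; here the Fourier decay of $\nu$ is invoked through the pointwise estimate $|\hat{\nu}(\alpha)| \leq |\hat{1}_{[N]}(\alpha)| + \theta N \leq 1/\|\alpha\| + \theta N$, together with the additional decay provided by the smoothing factor $\hat{\mu}(\alpha)$ for $\|\alpha\|$ exceeding $1/M$. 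The rescaling of the previous step only perturbs individual Fourier coefficients by a factor $1 + O(\theta N/M)$, contributing a further error that is negligible for the chosen $M$.

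The main obstacle is the high-frequency estimate: the bound $f \leq \nu$ does \emph{not} yield the pointwise comparison $|\hat{f}(\alpha)| \leq |\hat{\nu}(\alpha)|$, so transferring the Fourier decay of $\nu$ into a sufficiently strong bound on $\hat{f}(1-\hat{\mu})$ requires care, and forces the auxiliary parameter $M$ to be chosen of size $\theta N L^{C}$ for an appropriate constant $C$. The emergence of the $(\log(1/\theta))^{3/2}$ factor reflects the balance between the quadratic Taylor error on low frequencies and the decay of the Fej\'er kernel on high frequencies.
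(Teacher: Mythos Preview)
The paper does not actually prove this lemma; it cites the second author's survey \cite{boundedapproximation} for the argument, which ultimately traces back to Green's transference machinery \cite{greenprimes}. So there is no in-paper proof to compare against line by line, but the standard proofs in those references are structurally quite different from what you propose, and your sketch contains a genuine gap at exactly the point you flag.

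The difficulty is the high-frequency regime. With $g_0 = f\ast\mu$ one has $\hat f - \hat g_0 = \hat f\,(1-\hat\mu)$, and for $\|\alpha\|$ of order $1/M$ or larger the Fej\'er factor satisfies $1-\hat\mu(\alpha)\asymp 1$, so you must bound $|\hat f(\alpha)|$ itself. As you correctly note, $0\le f\le\nu$ gives no pointwise control on $\hat f$ in terms of $\hat\nu$: one can have $|\hat f(\alpha)|\asymp N$ at frequencies where $|\hat\nu(\alpha)|$ is tiny. The Fourier decay hypothesis on $\nu$ is therefore of no direct use here, and nothing in the sketch explains how ``care'' or a clever choice of $M$ circumvents this. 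A single-scale physical-space smoothing of $f$ cannot produce the conclusion; there exist $f\le\nu$ with a single large Fourier coefficient at an arbitrary frequency $\alpha_0$, and for such $f$ your $g_0$ satisfies $|\hat f(\alpha_0)-\hat g_0(\alpha_0)|\gg N$ whenever $\|\alpha_0\|\gg 1/M$.

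The arguments in the cited references proceed instead by first \emph{locating} the large Fourier coefficients of $f$. One either (i) runs an energy-increment iteration, peeling off characters at frequencies in the large spectrum of $f$ until the remainder has uniformly small Fourier transform, and then uses the Fourier decay of $\nu$ to show the structured part is close to a $[0,1]$-valued function; or (ii) applies a Hahn--Banach/dense-model argument. In either case the kernel one convolves against is adapted to the large spectrum of $f$ (a Bohr-set kernel, not a fixed-width Fej\'er kernel), and the bound $N/(\log(1/\theta))^{3/2}$ emerges from balancing the number of extracted frequencies against the threshold at which one stops. Your Fej\'er construction gets the pointwise-boundedness step right, but the Fourier approximation step needs to be replaced by one of these spectrum-aware arguments.
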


The advantage of a bounded approximant is that it is amenable to an application of Bloom's theorem, and should therefore count many solutions to \eqref{MainEqn}.  We wish to approximate the characteristic function of a set of squares weighted by an unbounded majorant.  Given a sufficiently good approximation, we might therefore hope that the number of solutions to \eqref{MainEqn} counted by our unbounded function is similar to the count for the bounded function.  This is indeed the case, provided that our majorant possesses an additional property.

\begin{definition}[Restriction at $p$] We say that a majorant $\nu$  on $[N]$ satisfies a \emph{restriction estimate at exponent $p$} if 
$$
\sup_{|\phi| \leq \nu} \int_{\T} \abs{\hat{\phi}(\alpha)}^p \intd \alpha \ll_p \norm{\nu}_1^p N^{-1}.
$$
\end{definition}

The utility of this property for our purposes is demonstrated in the following lemma, which we prove in \S \ref{sec:configcontrol}.

\begin{lemma}[Restriction implies configuration control]\label{lem:configcontrol}  Suppose that $\nu$ is a majorant with $\norm{\nu}_1 \ll N$ and which satisfies a restriction estimate at some exponent $2 \leq p < s$.  
Let $c_1, \dots, c_s \in \Z\setminus\set{0}$.  
Then for functions $f, g : \Z \to \C$ with $|f| \leq \nu$ and $|g| \leq 1_{[N]}$ we have
\begin{equation*}
\abs{\sum_{\vc\cdot\vx = 0} \brac{\prod_{i = 1}^s f(x_i) -  \prod_{i = 1}^s g(x_i)}} \ll_{p,s} N^{s-1} \brac{ N^{-1} \bignorm{\hat{f}-\hat{g}}_{\infty}}^{1-\set{p}}.
\end{equation*}
\end{lemma}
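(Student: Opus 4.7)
The plan is to telescope the product difference, turn each term into a Fourier integral on $\TT$, and apply Hölder with a specific family of exponents that channels $\widehat{f-g}$ into a single $L^{p/\{p\}}$-slot. The fractional exponent $1 - \{p\}$ in the conclusion will then emerge from interpolating this $L^{p/\{p\}}$-norm between $L^p$ and $L^\infty$.

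To set this up, I use the identity
\[
\prod_{i=1}^s f(x_i) - \prod_{i=1}^s g(x_i) = \sum_{j=1}^s (f-g)(x_j) \prod_{i<j} g(x_i) \prod_{i>j} f(x_i),
\]
sum over the hyperplane $\vc \cdot \vx = 0$, and apply Fourier inversion $1_{n=0} = \int_\TT e(\alpha n)\intd\alpha$. Each telescoped term equals
\[
S_j = \int_\TT \widehat{f-g}(c_j\alpha) \prod_{i\neq j} \hat H_i(c_i\alpha) \intd\alpha,
\]
with $H_i\in\set{f,g}$; non-vanishing of each $c_i$ ensures $\alpha\mapsto c_i\alpha$ preserves the Haar measure on $\TT$, so $L^q$-norms of $\hat H_i(c_i\cdot)$ match those of $\hat H_i$.

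Setting $m := \lfloor p \rfloor \leq s - 1$ (since $p < s$) and noting $\{p\}/p + m/p = 1$, Hölder with exponent $p/\{p\}$ on the $\widehat{f-g}$ factor, $p$ on $m$ of the remaining factors, and $\infty$ on the other $s-1-m$ yields
\[
|S_j| \leq \bignorm{\widehat{f-g}}_{p/\{p\}} \cdot \prod_{A}\bignorm{\hat H_i}_p \cdot \prod_B \bignorm{\hat H_i}_\infty.
\]
The restriction estimate at $p$ gives $\bignorm{\hat f}_p \ll N^{1-1/p}$, while $|g|\leq 1_{[N]}$ combined with the standard interpolation $\bignorm{\hat g}_p\leq \bignorm{\hat g}_\infty^{1-2/p}\bignorm{\hat g}_2^{2/p}$ delivers the same estimate for $g$. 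Together with $\bignorm{\hat H_i}_\infty \ll N$ and, via $|f-g|\leq \nu+1_{[N]}$, $\bignorm{\widehat{f-g}}_p\ll N^{1-1/p}$, the interpolation
\[
\bignorm{\widehat{f-g}}_{p/\{p\}} \leq \bignorm{\widehat{f-g}}_p^{\{p\}}\bignorm{\widehat{f-g}}_\infty^{1-\{p\}} \ll N^{(1-1/p)\{p\}} M^{1-\{p\}}
\]
with $M := \bignorm{\widehat{f-g}}_\infty$ then yields, on simplifying exponents, $|S_j|\ll M^{1-\{p\}} N^{s-2+\{p\}} = N^{s-1}(M/N)^{1-\{p\}}$. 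Summing over $j$ completes the argument.

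The main obstacle is the exponent bookkeeping: one needs the identity $m(1-1/p) + (s-1-m) + (1-1/p)\{p\} = s-2+\{p\}$, which is equivalent to $(m+\{p\})/p = 1$. A minor annoyance is the integer case $\{p\}=0$, when the $L^{p/\{p\}}$-slot is vacuous; there one simply pulls $\widehat{f-g}$ out in $L^\infty$ from the start and distributes the remaining $s-1$ factors as $m=p$ copies of $L^p$ and $s-1-p$ copies of $L^\infty$, arriving at the bound $|S_j|\leq M\cdot N^{s-2}$ by the same calculation.
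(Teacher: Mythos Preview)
Your proof is correct and is essentially the same as the paper's. The paper also telescopes and then proves an auxiliary lemma (its Lemma~\ref{lem:buzzard}) which, after pulling out $s-t$ factors in $L^\infty$ with $t=\lfloor p\rfloor+1$, applies H\"older to the remaining $t$ factors by splitting one of them as $\|\hat f_1\|_\infty^{t-p}\|\hat f_1\|_p^{1-(t-p)}$; since $t-p=1-\{p\}$, this is exactly your interpolation $\|\widehat{f-g}\|_{p/\{p\}}\leq \|\widehat{f-g}\|_\infty^{1-\{p\}}\|\widehat{f-g}\|_p^{\{p\}}$ in disguise, with the same count of $m=\lfloor p\rfloor$ factors in $L^p$ and $s-1-m$ in $L^\infty$.
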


Finally, 
we
 need to ensure that our majorant does not only count $K$-trivial solutions to \eqref{MainEqn}.

\begin{definition}[Saves $\eta$ on the $K$-trivial solutions]\label{saves eta} 
Let $\eta>0$ and let 
$K$ denote a finite union of $k$ proper subspaces of the hyperplane \eqref{MainEqn}.  We say that the majorant $\nu$ \emph{saves $\eta$ on the $K$-trivial solutions} if
$$
\sum_{\vx \in K} \prod_{i=1}^s \nu(x_i) \ll_{s, k} \norm{\nu}_1^sN^{-1-\eta}.
$$
\end{definition}

With these definitions in hand we are able to prove the following.

\begin{proposition}[Sufficient majorant conditions]\label{BourbakiProp}  Let $c_i \in \Z\setminus\set{0}$ with $c_1 + \dots + c_s = 0$ and $s\geq 3$.  Let $K$ denote a finite union of $k$ proper subspaces of the hyperplane \eqref{MainEqn}. Suppose that $\nu$ is a majorant satisfying 
\begin{itemize}
\item the restriction estimate at some exponent $p\in [s-1, s)$;
\item  Fourier decay of level $\theta \leq 1$;
\item  the $K$-trivial estimate with saving $\eta$.
\end{itemize}  Then any set $A \subset \supp(\nu)$ containing only $K$-trivial solutions to \eqref{MainEqn} satisfies
\begin{equation}\label{BourbakiBound}
\sum_n 1_A(n) \nu(n) \ll_{\vc,p,k,\eta,\eps} \frac{N}{\min\set{
\log\log(1/\theta),\ \log N}^{s-2-\eps}} .
\end{equation}
\end{proposition}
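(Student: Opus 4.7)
The plan is to combine the three hypotheses through a standard transference scheme: approximate $f := 1_A \nu$ by a bounded function using Fourier decay, apply Bloom's theorem to the approximant, and then pass back using the restriction estimate. Throughout, write $S := \sum_n f(n) = \sum_n 1_A(n)\nu(n)$ for the quantity we wish to bound. Observe first that $\theta \leq 1$ and Fourier decay already give $\|\nu\|_1 = \hat\nu(0) \leq \hat{1}_{[N]}(0) + \theta N \ll N$, so the hypothesis $\|\nu\|_1 \ll N$ required by Lemma \ref{lem:configcontrol} is automatic.

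Now I would apply the bounded approximation lemma to produce $0 \leq g \leq 1_{[N]}$ with $\|\hat f - \hat g\|_\infty \ll N/\log(1/\theta)^{3/2}$. Comparing Fourier coefficients at zero, $\sum_n g(n) \geq S - C'N/\log(1/\theta)^{3/2}$. If $S \leq 2C' N/\log(1/\theta)^{3/2}$ the conclusion \eqref{BourbakiBound} already holds (crudely, since $\log\log(1/\theta)^{s-2-\eps} \ll \log(1/\theta)^{3/2}$), so assume the opposite, in which case $\sum g \geq S/2$; set $\delta' := S/(2N) \leq \sum g/N$.

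Apply Bloom's theorem to $g$ at density $\delta'$:
$$
\sum_{\vc \cdot \vx = 0} \prod_{i=1}^s g(x_i) \geq c(\delta') N^{s-1} \gg_{\vc} \exp\!\left(-C/\delta'^{\frac{1}{s-2-\eps}}\right) N^{s-1}.
$$
Next, invoke the configuration control lemma (Lemma \ref{lem:configcontrol}); since $p \in [s-1,s)$ we have $1 - \{p\} = s - p \in (0,1]$, giving
$$
\left| \sum_{\vc \cdot \vx = 0} \prod_{i=1}^s f(x_i) \;-\; \sum_{\vc \cdot \vx = 0} \prod_{i=1}^s g(x_i) \right| \;\ll_{p,s}\; \frac{N^{s-1}}{\log(1/\theta)^{\frac{3(s-p)}{2}}}.
$$
On the other hand, every solution to $\vc \cdot \vx = 0$ with all $x_i \in A$ is $K$-trivial, i.e.\ lies in $K$, so using the $K$-trivial saving and $\|\nu\|_1 \ll N$,
$$
\sum_{\vc \cdot \vx = 0} \prod_{i=1}^s f(x_i) \;\leq\; \sum_{\vx \in K} \prod_{i=1}^s \nu(x_i) \;\ll_{s,k}\; \|\nu\|_1^s N^{-1-\eta} \;\ll\; N^{s-1-\eta}.
$$

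Chaining the three inequalities yields
$$
\exp\!\left(-C/\delta'^{\frac{1}{s-2-\eps}}\right) \;\ll_{\vc,p,k,\eta}\; N^{-\eta} + \log(1/\theta)^{-\frac{3(s-p)}{2}}.
$$
Taking logarithms and rearranging gives $\delta'^{-1/(s-2-\eps)} \gg \min\bigl(\eta \log N,\; \tfrac{3(s-p)}{2}\log\log(1/\theta)\bigr)$, so
$$
S \;=\; 2N\delta' \;\ll_{\vc,p,k,\eta,\eps}\; \frac{N}{\min\!\bigl(\log N,\; \log\log(1/\theta)\bigr)^{s-2-\eps}},
$$
as required. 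No single step looks genuinely hard here—the work was all done in setting up the three majorant properties and the supporting lemmas. The only delicate bookkeeping is the split in Step 1 (treating the regime where $S$ is already small by Fourier decay) and tracking that the exponent $1 - \{p\} = s - p$ produced by the restriction estimate is strictly positive, which is exactly why the hypothesis $p < s$ (i.e.\ a \emph{supercritical} restriction exponent) is needed.
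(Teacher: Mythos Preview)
Your proposal is correct and follows essentially the same transference scheme as the paper: bounded approximation via Fourier decay, Bloom's theorem applied to the approximant, configuration control to pass back to $f$, and the $K$-trivial saving to force a contradiction. The only cosmetic difference is that the paper phrases the argument by listing three conditions on $\delta$ (your $2\delta'$) that together guarantee a $K$-nontrivial solution and then negates, whereas you chain the inequalities directly; your explicit observation that $1-\{p\}=s-p$ for $p\in[s-1,s)$ is in fact used implicitly in the paper's application of Lemma~\ref{lem:configcontrol}.
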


Note that in applications we do not expect to be able to take $\theta$ any smaller that $N^{-c}$, in which case the minimum occurring in  the right hand side of \eqref{BourbakiBound} is $\log\log(1/\theta)$.

The statistic
$$
\sum_n 1_A(n) \nu(n)
$$
is not necessarily the same as the quantity $|A|$, to which Theorem \ref{thm:main} pertains.  However, in most applications there is simple polynomial dependence between these quantities.  We discuss this in the context of our application in
Lemma \ref{lem:MandarinDuck}.

\begin{proof}[Proof of Proposition \ref{BourbakiProp}]
Define $f := 1_A \nu$ and set
$$
\delta := N^{-1}\sum_n f(n).
$$  
We wish to determine a lower bound on $\delta$ in terms of $s, p, k, \eta, \vc$ and $\theta$ which guarantees that $A$ contains at least one $K$-nontrivial solution to \eqref{MainEqn}.
By the bounded approximation lemma, there exists a function $0 \leq g \leq 1_{[N]}$ such that 
$$
\bignorm{\hat{f} - \hat{g}}_{\infty} \ll \frac{N}{\log(1/\theta)^{3/2}}.
$$
Since the $L^1$-norm of a non-negative function is equal to the sup-norm of its Fourier transform, we have
\begin{align*}
\sum_n g(n) = \norm{\hat{g}}_\infty  \geq \bignorm{\hat{f}}_\infty - \bignorm{\hat{f} - \hat{g}}_\infty 
&= \sum_n f(n) - \bignorm{\hat{f} - \hat{g}}_\infty \\
&= \delta N + O\brac{\frac{N}{\log(1/\theta)^{3/2}}}.
\end{align*}
Thus we may deduce that 
$$
\sum_n g(n) \geq (\delta/2) N,
$$
provided we assume 
\begin{equation}\label{delta1}
\delta \geq \frac{C}{\log(1/\theta)^{3/2}},
\end{equation}
 for some absolute constant $C$.  Bloom's theorem then implies that  there exists $c(\delta/2) > 0$ such that 
$$
\sum_{\vc \cdot \vx = 0} \prod_{i=1}^s g(x_i) \gg_{\vc} c(\delta/2) N^{s-1}.
$$

Since our majorant has Fourier decay of level $\leq 1$, we have $\|\nu\|_1 = \|\hat{\nu}\|_\infty \leq \|\hat{\nu}-\hat{1}_{[N]}\|_\infty + \|\hat{1}_{[N]}\|_\infty \ll N$.  Thus Lemma \ref{lem:configcontrol} yields
$$
\sum_{\vc \cdot \vx = 0} \prod_{i=1}^s f(x_i) = \sum_{\vc \cdot \vx = 0} \prod_{i=1}^s g(x_i) + O_{s, p}\brac{\frac{N^{s-1}}{\log(1/\theta)^{\frac{3(s-p)}{2}}}}.
$$
Hence, if we ensure that
\begin{equation}\label{delta2}
c(\delta/2) \geq \frac{C_{s,p, \vc}}{\log(1/\theta)^{\frac{3(s-p)}{2}}}
\end{equation}
for some absolute constant $C_{s,p, \vc}$, we may conclude that
$$
\sum_{\vc \cdot \vx = 0} \prod_{i=1}^s f(x_i) \gg_{\vc} c(\delta/2) N^{s-1}.
$$

Yet if $A$ contains only $K$-trivial solutions to \eqref{MainEqn}, then the fact that $\nu$ saves $\eta$ on the $K$-trivial solutions gives 
$$
\sum_{\vc \cdot \vx = 0} \prod_{i=1}^s f(x_i) \leq \sum_{\vx \in K}  \prod_{i=1}^s \nu(x_i) \ll_{s,k} N^{s-1 - \eta},
$$
again using $\norm{\nu}_1 \ll N$. 
Hence $A$ must contain a $K$-nontrivial solution on ensuring that 
\begin{equation}\label{delta3}
c(\delta/2) \geq C_{s,k, \vc} N^{-\eta}
\end{equation}
for some absolute constant $C_{s,k, \vc}$.

Employing Bloom's lower bound for $c(\delta/2)$, our conditions \eqref{delta1}, \eqref{delta2} and \eqref{delta3} reduce to ensuring that for any $\eps >0$ there exists an absolute constant $C_{s, p, k, \eta,\vc, \eps}$ such that 
$$
\delta \geq C_{s, p, k, \eta,\vc, \eps} \max\set{ \frac{1}{\log(1/\theta)^{3/2}}, \frac{1}{\bigbrac{\log\log(1/\theta)}^{s-2-\eps} }, \frac{1}{(\log N)^{s-2-\eps}}}.
$$
The result is now obvious.
\end{proof}

 Heuristically, our strategy is to show that the following weighted indicator function of the squares is a suitable majorant:
\begin{equation}\label{IntroNu}
\nu(n) := \begin{cases} 2 \sqrt{n} & \text{if $n = x^2$ for some $x \in [X]$,}\\
					0 & \text{otherwise}.\end{cases}
\end{equation}
Taking $N := X^2$ we see that $\nu$ is supported on the interval $[N]$ with $\norm{\nu}_{1} \sim N$.
It follows from work of Bourgain \cite{bourgain89} that this majorant satisfies a restriction estimate at any exponent $p > 4$.  Moreover, a standard point-counting argument (see \S \ref{sec:KTriv}) shows that this majorant saves at least $1/2$ (essentially) on the $K$-trivial solutions.  However, the majorant does not exhibit sufficient Fourier decay. Indeed,  it is well-known from the classical circle method that there are values of $\alpha$ for which
$
\abs{\hat{\nu}(\alpha) - \hat{1}_{[N]}(\alpha)} \gg N.
$ 
(For example, take $\alpha=a/q$ with $2 < q \ll 1$.)  These problematic values of $\alpha$ arise from the major arcs, and are a consequence of the fact that the squares are not equidistributed in congruence classes to small moduli.  

We  overcome this difficulty by  implementing the $W$-trick,  as found in Green \cite{greenprimes}.
Our $W$-tricked version of the majorant \eqref{IntroNu} is found in \S \ref{sec:Wtrick}. 
Although this allows us to verify the existence of non-trivial Fourier decay in 
\S \ref{sec:ExpSums},  the restriction estimate no longer follows directly from Bourgain's work.  We show how to modify his proof in \S \ref{sec:restriction}.  Finally, in \S \ref{sec:KTriv} we show that the $W$-tricked majorant saves $1/5$ on the $K$-trivial solutions and in \S \ref{sec:MainThmProof} we bring everything together to prove Theorem \ref{thm:main} via Proposition \ref{BourbakiProp}.

\section{Configuration control}\label{sec:configcontrol}

The aim of this section is to establish Lemma \ref{lem:configcontrol}.
Let $c_1, \dots, c_s \in \Z\setminus\set{0}$ and suppose that $\nu$ is a majorant with $\norm{\nu}_1 \ll N$ and which satisfies a restriction estimate at exponent $2 \leq p < s$.  Then we need to show that  for any functions $f, g : \Z \to \C$ with $|f| \leq \nu$ and $|g| \leq 1_{[N]}$ we have
\begin{equation*}
\abs{\sum_{\vc\cdot\vx = 0} \brac{\prod_{i = 1}^s f(x_i) -  \prod_{i = 1}^s g(x_i)}} \ll_{p,s} N^{s-1} \brac{ N^{-1} \bignorm{\hat{f}-\hat{g}}_{\infty}}^{1-\{p\}}.
\end{equation*}
On recording the telescoping identity
$$
\prod_{i = 1}^s f(x_i) -  \prod_{i = 1}^s g(x_i) = \sum_{j=1}^s \bigbrac{f(x_j) - g(x_j)}\prod_{i < j} f(x_i)\prod_{i>j} g(x_i),
$$
we see that  Lemma \ref{lem:configcontrol} is implied by the following result.

\begin{lemma}[Configuration control]\label{lem:buzzard}  Let $c_1, \dots, c_s \in \Z\setminus\set{0}$.  Suppose that $\nu$ is a majorant with $\norm{\nu}_1 \ll N$ and which satisfies a restriction estimate at some exponent $2 \leq p < s$.  Then for any functions $f_1, \dots, f_s : \Z \to \C$, each satisfying $|f_i| \leq \nu$ or $|f_i| \leq 1_{[N]}$, we have
\begin{equation}\label{configcontroleqn}
\abs{\sum_{\vc\cdot\vx = 0} \prod_{i = 1}^s f_i(x_i)} \ll_{p} N^{s-1}\brac{N^{-1}\min_i\bignorm{\hat{f}_i}_{\infty}}^{1-\set{p}}.
\end{equation}
\end{lemma}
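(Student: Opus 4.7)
The plan is to pass to the Fourier side via the identity
$$
\sum_{\vc\cdot\vx = 0}\prod_{i=1}^s f_i(x_i) = \int_{\T} \prod_{i=1}^s \hat{f}_i(c_i\alpha)\,d\alpha,
$$
obtained by expanding each transform and integrating $e(\alpha\,\vc\cdot\vx)$ against $\alpha$, and then to bound the resulting integral by H\"older's inequality combined with the hypothesised restriction estimate. Since any restriction estimate at $p_0$ trivially upgrades to one at every larger exponent (using $|\hat{\phi}|^q \leq \|\hat{\phi}\|_\infty^{q-p_0}|\hat{\phi}|^{p_0}$ together with $\|\hat{\phi}\|_\infty \leq \|\nu\|_1 \ll N$), I would first reduce to the case $p \in [s-1, s)$, which is the range in which the H\"older exponents will balance correctly.

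With $j$ chosen to attain $\min_i \|\hat{f}_i\|_\infty$, I would pull a factor of $\|\hat{f}_j\|_\infty^{1-\set{p}}$ out of the integrand, leaving
$$
\int_{\T} |\hat{f}_j(c_j\alpha)|^{\set{p}} \prod_{i \neq j} |\hat{f}_i(c_i\alpha)|\,d\alpha.
$$
H\"older's inequality is then applied with exponent $p/\set{p}$ on the first factor and $p$ on each of the remaining $s-1$ factors. The reciprocals sum to $\set{p}/p + (s-1)/p$, which equals $1$ precisely when $p = (s-1) + \set{p}$, i.e.\ when $\lfloor p\rfloor = s-1$; this is exactly what the reduction above guarantees.

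Each resulting integral has the form $\int_{\T} |\hat{f}_i(c_i\alpha)|^p\,d\alpha$, and since dilation by a non-zero integer is measure-preserving on $\T$, this equals $\int_{\T} |\hat{f}_i(\beta)|^p\,d\beta$. For those indices with $|f_i| \leq \nu$, this is $O(N^{p-1})$ by the hypothesised restriction estimate; for those with $|f_i| \leq 1_{[N]}$, the same bound follows from Plancherel together with the trivial $\|\hat{f}_i\|_\infty \leq N$. Combining these estimates produces an upper bound of $N^{p-1}\|\hat{f}_j\|_\infty^{1-\set{p}}$, which rearranges to the claimed $N^{s-1}\bigl(N^{-1}\min_i \|\hat{f}_i\|_\infty\bigr)^{1-\set{p}}$ via the identity $p - 1 = s - 2 + \set{p}$. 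The only delicate point is the bookkeeping of the H\"older exponents, and the hypothesis $p < s$ is precisely what makes them align; the argument would break down at $p = s$, where the sup-norm factor disappears and the exponents can no longer sum to one.
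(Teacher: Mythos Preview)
Your proof is correct and follows essentially the same H\"older argument as the paper. The only cosmetic difference is that the paper keeps $p$ fixed and absorbs the ``extra'' $s - \lfloor p\rfloor - 1$ factors via the trivial bound $|\hat f_i|\ll N$ before applying H\"older to the remaining $t=\lfloor p\rfloor+1$ factors, whereas you upgrade the restriction exponent and apply H\"older to all $s$ factors at once; one small point to make explicit in your reduction is that the upgraded exponent should be taken as $p' = (s-1)+\{p\}$, so that $p'\geq p$ and $\{p'\}=\{p\}$, ensuring the conclusion is unchanged.
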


\begin{proof}
Notice that if $|f_i| \leq 1_{[N]}$ then, by Parseval and our assumption that $p \geq 2$, we have the bound
$$
\bignorm{\hat{f}_i}_p \leq \norm{f_i}_2^{2/p} \norm{f_i}_1^{(p-2)/p} \leq N^{1-\recip{p}}.
$$
Similarly, if $|f_i| \leq \nu$ then our restriction estimate assumption and the bound $\norm{\nu}_1 \ll N$ gives that $\bignorm{\hat{f}_i}_p \ll_p N^{1-\recip{p}}$.

Let $t = \floor{p} + 1$.  Using orthogonality and the trivial estimate $\bigabs{\hat{f}_i} \ll N$, we have
$$
\sum_{\vc\cdot\vx = 0} \prod_{i = 1}^s f_i(x_i) = \int_{\T} \hat{f}_{1}(c_1\alpha) \dotsm \hat{f}_s(c_s\alpha) \intd \alpha \ll N^{s-t}
\hspace{-0.2cm}
\int_{\T} \bigabs{\hat{f}_{1}(c_1\alpha) \dotsm \hat{f}_t(c_t\alpha)} \intd \alpha.
$$
By H\"older's inequality
\begin{align*}
\int_{\T} \bigabs{\hat{f}_{1}(c_1\alpha) \dotsm \hat{f}_t(c_t\alpha)} \intd \alpha & \leq \bignorm{\hat{f}_1}^{t-p}_{\infty}\bignorm{\hat{f}_1}_p^{1-(t-p)}\prod_{ 1<i \leq t} \bignorm{\hat{f}_i}_p\\
& \ll_{t, p}  \bignorm{\hat{f}_1}^{t-p}_{\infty} N^{p-1} \\
& = \brac{\bignorm{\hat{f}_1}_{\infty} N^{-1}}^{t-p} N^{t-1} .
\end{align*}
The lemma now follows with $i = 1$ on the right-hand side of \eqref{configcontroleqn}.  The general case follows on re-ordering the indices of the $f_i$.
\end{proof}

\section{The $W$-trick}\label{sec:Wtrick}

To prove Theorem \ref{thm:main} we need to construct a majorant supported on the squares which satisfies the conditions of Proposition \ref{BourbakiProp}.  We give the initial construction in this section.  The idea is that although \eqref{IntroNu} may not possess sufficient Fourier decay, by making an appropriate choice of integers $W$ and $b$, the Fourier transform of the function
$$
n \mapsto \nu(Wn + b)
$$
should exhibit the required cancellation.

Set 
\begin{equation}\label{Ww}
W := 8\prod_{2<p \leq w} p, \qquad\text{where}\qquad w := 
\sqrt{\log X}. 
\end{equation}
By the prime number theorem, there exists an absolute constant $C$ such that
\begin{equation}\label{Wbound}
W \leq \exp(C \sqrt{\log X}) \ll_\eps X^\eps,
\end{equation}
for any $\varepsilon>0$.

Let $b_1$ be a $w$-smooth number and let $b_2 \in [W]$ with $(b_2, W) = 1$.  Notice that every positive square lies in one and only one of the arithmetic progressions
$$
b_1^2( W \cdot \Z - b_2).
$$
This motivates the  definition of our $W$-tricked majorant function.
Set
$$
\sigma(b_2) : = \hash\set{ z \in [W] : z^2 + b_2\equiv 0\ \bmod W}.
$$
Then we define the function $\nu_\vb: \Z \to [0, \infty)$ by 
\begin{equation}\label{NuDef}
\nu_\vb(n) := \begin{cases} \frac{2\sqrt{Wn-b_2}}{\sigma(b_2) }  & \text{if $b_1^2(Wn - b_2) = x^2$ for some $x \in [X]$},\\
0 & \text{otherwise}.\end{cases}
\end{equation}
Note that $\sigma(b_2)\neq 0$ whenever $n\in \ZZ$ is such that 
 $b_1^2(Wn - b_2) = x^2$ for some $x \in [X]$. Indeed, in this case,  $-b_2$ must be a square modulo $W$,
which is  equivalent  to requiring  $-b_2$ to be a square modulo $8$ and a square modulo every odd prime $p\leq w$. 
There are 4 possible values of $z\bmod{8}$ such that $z^2\equiv -b_2 \bmod{8}$ and $2$ values of 
 $z\bmod{p}$ such that $z^2\equiv -b_2 \bmod{p}$ for any odd prime prime $p\leq w$.
Hence the Chinese remainder theorem 
yields
$
\sigma(b_2)=2\cdot 2^{\pi(w)}.
$
Furthermore, 
on writing
\begin{equation}\label{NDefn}
N_{\vb} := \floor{\frac{X^2}{b_1^2W}} + 1,
\end{equation}
one can check that 
$\nu_\vb$  is supported on $[N_{\vb}]$ and that
\begin{equation}\label{eqn:WhooperSwan}
\sum_n \nu_\vb(n) = N_{\vb} + O(1 + Xb_1^{-1}) = N_{\vb} + O(\sqrt{WN_{\vb}}).
\end{equation}

Given a set of integers $A$, let
$$
A_{\vb}: =\set{ n \in \Z : b_1^2(Wn -b_2) = x^2 \text{ for some } x \in A}.
$$   
We proceed by establishing the following result. 
\begin{lemma}\label{lem:MandarinDuck}
There exists an absolute constant $C$ such that the following holds for any $\delta \geq C(\log X)^{-1}$.  Let $A \subset  [X]$ with $|A| = \delta X$.  Then there exists a $w$-smooth number $b_1 \leq X^{1/2}$ and there exists $b_2 \in [W]$ with $(b_2, W) = 1$ such that
$$
\sum_n 1_{A_{\vb}}\brac{n} \nu_{\vb} (n) \gg \delta^2 N_{\vb}.
$$
\end{lemma}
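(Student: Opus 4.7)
The plan is to produce the required pair $(b_1,b_2)$ via a weighted pigeonhole argument applied to a natural disjoint decomposition of $A$, after first discarding those $x \in A$ whose $w$-smooth part is unusually large. Call a pair $(b_1,b_2)$ \emph{admissible} if $b_1$ is $w$-smooth, $b_2 \in [W]$ satisfies $(b_2,W)=1$, and $-b_2$ is a square modulo $W$. Writing each positive integer $x$ uniquely as $x = s(x)r(x)$ with $s(x)$ its $w$-smooth part and $r(x)$ coprime to $W$, one checks that for any admissible $(b_1,b_2)$ the set
\[
A_{b_1,b_2} := \{x \in A : b_1 \mid x,\ (x/b_1)^2 \equiv -b_2 \pmod{W}\}
\]
consists exactly of those $x \in A$ with $b_1 = s(x)$ and $r(x)^2 \equiv -b_2 \pmod{W}$, since $(b_2,W)=1$ forces $x/b_1$ to be coprime to $W$ and hence (by $w$-smoothness of $b_1$) $b_1 = s(x)$. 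Hence $A = \bigsqcup_{(b_1,b_2)\text{ admissible}} A_{b_1,b_2}$, and unpacking \eqref{NuDef} via the bijection $n \leftrightarrow x = b_1\sqrt{Wn-b_2}$ gives the identity
\[
\sum_n 1_{A_{\vb}}(n)\,\nu_{\vb}(n) = \frac{2}{\sigma(b_2)\,b_1} \sum_{x \in A_{b_1,b_2}} x.
\]

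The first ingredient will be the elementary first-moment bound $\sum_{x \in A} x \ge \delta^2 X^2/2$, which holds by rearrangement since the minimiser over $A \subset [X]$ of size $\delta X$ is $\{1,\ldots,\lfloor \delta X\rfloor\}$. The second is a tail bound: every $x \le X$ with $s(x) > X^{1/2}$ is divisible by some $w$-smooth integer in $(X^{1/2},X]$, so
\[
|\{x \le X : s(x) > X^{1/2}\}| \le X^{1/2}\cdot |\{n \le X : n\text{ is } w\text{-smooth}\}| \le X^{1/2+o(1)},
\]
where the smooth count is $X^{o(1)}$ because $w = \sqrt{\log X}$ gives only $\pi(w) \ll \sqrt{\log X}/\log\log X$ primes available. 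Hence the contribution of such $x$ to the first moment is $\le X^{3/2+o(1)}$, negligible beside $\delta^2 X^2$ under $\delta \ge C/\log X$; restricting to $A' := \{x \in A : s(x) \le X^{1/2}\}$ we retain $\sum_{x \in A'} x \gg \delta^2 X^2$.

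The closing step will be weighted pigeonhole with weights $\mu_{b_1,b_2} := \sigma(b_2)/(b_1 W)$, over admissible pairs with $b_1 \le X^{1/2}$. Using that each $z \in [W]$ coprime to $W$ gives rise to a unique admissible $b_2 \equiv -z^2 \pmod{W}$, one obtains $\sum_{b_2} \sigma(b_2) = \phi(W)$, whence the total weight equals
\[
\frac{\phi(W)}{W} \sum_{\substack{b_1 \le X^{1/2}\\ b_1 \text{ is $w$-smooth}}} \frac{1}{b_1} \;\ll\; 1,
\]
by the Mertens estimates $\phi(W)/W \asymp 1/\log w$ and $\sum_{b_1 \text{ is $w$-smooth}} 1/b_1 \le \prod_{p \le w}(1-1/p)^{-1} \asymp \log w$. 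Since $\sum_{(b_1,b_2)} \sum_{x \in A_{b_1,b_2}} x = \sum_{x \in A'} x \gg \delta^2 X^2$, weighted pigeonhole delivers an admissible $(b_1,b_2)$ with $\sum_{x \in A_{b_1,b_2}} x \gg \delta^2 X^2\,\mu_{b_1,b_2}$; substituting into the identity above and using $N_{\vb} \asymp X^2/(b_1^2 W)$ yields the claimed bound.

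The main potential obstacle lies in the tail bound: one must keep the removed mass negligible throughout the full range $\delta \ge C/\log X$. Fortunately the extremely rapid decay of the smooth counting function at $w = \sqrt{\log X}$ saves a factor of roughly $X^{1/2}$ beyond what is strictly needed, so there is ample slack.
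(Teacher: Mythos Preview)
Your proof is correct and takes a genuinely different route from the paper's.

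The paper applies an \emph{unweighted} pigeonhole to the cardinalities $|A_{\vb}|$ to locate a good pair $(b_1,b_2)$, and must then convert the cardinality lower bound into a lower bound on the weighted sum $\sum_n 1_{A_{\vb}}(n)\nu_{\vb}(n)$. This conversion costs an extra step: splitting $A_{\vb}$ into residue classes $A_{\vb,z}$, observing that at least half of each $A_{\vb,z}$ lies above $\tfrac{1}{2}W|A_{\vb,z}|$, and then applying Cauchy--Schwarz over $z$. You bypass this entirely by working from the outset with the first moments $\sum_{x\in A_{b_1,b_2}} x$, exploiting the clean identity $\sum_n 1_{A_{\vb}}(n)\nu_{\vb}(n) = 2(\sigma(b_2)b_1)^{-1}\sum_{x\in A_{b_1,b_2}} x$ together with the rearrangement bound $\sum_{x\in A} x \ge \tfrac{1}{2}\delta^2 X^2$. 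Your weighted pigeonhole, with weights $\sigma(b_2)/(b_1W)$ summing to at most $1$ (indeed the Mertens factors cancel exactly, since $\phi(W)/W \cdot \prod_{p\le w}(1-1/p)^{-1}=1$), then lands directly on the conclusion. Both arguments dispose of the large-$b_1$ tail similarly: the paper via Rankin's trick (saving $X^{1/4-\eps}$), you via the crude bound on the number of $w$-smooth integers below $X$ (saving $X^{1/2-o(1)}$). Your argument is shorter; the paper's has the minor advantage that it would still work if one only knew $|A|$ rather than anything about the positions of elements of $A$ inside $[X]$.
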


Consideration of the set $A=[\delta X]$ shows that the lower bound in this estimate is optimal. 

\begin{proof}[Proof of Lemma \ref{lem:MandarinDuck}]
It follows from Rankin's trick that the 
number of integers in $[X]$ divisible by a $w$-smooth number exceeding $X^{1/2}$ is at most
\begin{align*}
\sum_{\substack{b_1>X^{1/2}\\ \text{$b_1$ $w$-smooth}}} \frac{X}{b_1} \ll
\sum_{\substack{\text{$b_1$ $w$-smooth}}} \frac{X}{b_1} \left(\frac{b_1}{X^{1/2}}\right)^{1/2} 
&\ll X^{3/4}\sum_{\substack{\text{$b_1$ $w$-smooth}}} \frac{1}{b_1^{1/2}}\\
&\ll X^{3/4}\prod_{p<w}\left(1+\frac{1}{p^{1/2}}\right)\\
&\ll X^{3/4+\varepsilon},
\end{align*}
for any $\varepsilon>0$.
It follows that
$$
\delta X = \sum_{\substack{b_1 \leq X^{1/2}\\ b_1 \text{ is $w$-smooth}}}\sum_{\substack{b_2 \in [W] \\ (b_2, W) = 1}} |A_{\vb}| + O_\varepsilon\brac{X^{3/4+\varepsilon}}.
$$
Our assumption that $\delta \geq C (\log X)^{-1}$ therefore ensures that 
$$
\delta X \ll \sum_{\substack{b_1 \leq X^{1/2}\\ b_1 \text{ is $w$-smooth}}}\sum_{\substack{b_2 \in [W] \\ (b_2, W) = 1}} |A_{\vb}| .
$$
Similarly,
$$
 \sum_{\substack{b_1 \leq X^{1/2}\\ b_1 \text{ is $w$-smooth}}}\sum_{\substack{b_2 \in [W] \\ (b_2, W) = 1}} |[X]_{\vb}| \ll X,
$$
so that
\begin{align*}
\delta \sum_{\substack{b_1 \leq X^{1/2}\\ b_1 \text{ is $w$-smooth}}}\sum_{\substack{b_2 \in [W] \\ (b_2, W) = 1}} |[X]_{\vb}| \ll \sum_{\substack{b_1 \leq X^{1/2}\\ b_1 \text{ is $w$-smooth}}}\sum_{\substack{b_2 \in [W] \\ (b_2, W) = 1}} |A_{\vb}|.
\end{align*}
Applying the pigeon-hole principle, we conclude that there exists a $w$-smooth number $b_1 \leq X^{1/2}$ and $b_2 \in [W]$ with $(b_2, W) =1$ such that  
$$
\delta |[X]_{\vb}| \ll |A_{\vb}|.
$$
For each $z \in [W]$ with $z^2+b_2 \equiv 0 \bmod W$, write $A_{\vb, z}$ for the set of positive integers $x \equiv z \bmod W$ such that $x^2 = Wn - b_2$ for some $n \in A_{\vb}$.  Together these sets satisfy
$$
\sum_{z^2 +b_2 \equiv 0 \bmod W} |A_{\vb, z}| = |A_{\vb}| \gg \delta |[X]_{\vb}|.
$$

For any $T>0$ 
we claim that there are at most $T+1$ values of $x \in A_{\vb, z}$  satisfying $x \leq WT$.  
This follows on noting that elements 
of $A_{\vb, z}$ take the shape $z+Wy$ for suitable $y$. But there are clearly at most $T-z/W+1$ choices of $y$ satisfying $y\leq T-z/W$, from which the claim follows.
Applying the claim with $T=\trecip{2}|A_{\vb,z}|$, we deduce that for at least $\trecip{2} |A_{\vb, z}| -1$ values of $x \in A_{\vb, z}$  we have $x > \trecip{2}W|A_{\vb,z}|$. It follows from \eqref{NuDef}
that
\begin{align*}
\sum_n 1_{A_{\vb}}(n)\nu_{\vb}(n)&  = \frac{1}{\sigma(b_2)}\sum_{z^2 +b_2 \equiv 0 \bmod W} \sum_{{\ } x \in  A_{\vb, z}} 2x\\
&  \geq  \frac{2}{\sigma(b_2)}\sum_{z^2 +b_2 \equiv 0 \bmod W} 
(\trecip{2} |A_{\vb, z}|-1)\trecip{2} W |A_{\vb, z}|.
\end{align*}
An application of the  Cauchy--Schwarz inequality now shows that the right hand side is
\begin{align*}
&>\frac{W}{2}\brac{ \brac{ \frac{1}{\sigma(b_2)}\sum_{z^2 +b_2 \equiv 0 \bmod W} |A_{\vb, z}|}^2 - 2\frac{1}{\sigma(b_2)}\sum_{z^2 +b_2 \equiv 0 \bmod W} |A_{\vb, z}|} \\
& = \frac{W}{2} \brac{ \brac{\frac{|A_{\vb}|}{\sigma(b_2)}}^2 - 2\frac{|A_{\vb}|}{\sigma(b_2)}} .
\end{align*}
Recalling that  $b_1 \leq X^{1/2}$ and  $W \ll_\eps X^\eps$, 
provided we take $C$ sufficiently large in our lower bound $\delta \geq C(\log X)^{-1}$, 
we may proceed under the assumption that 
 $|A_{\vb}| \geq 4\sigma(b_2)$. But then it follows that 
$$
\brac{ \brac{\frac{|A_{\vb}|}{\sigma(b_2)}}^2 - 2\frac{|A_{\vb}|}{\sigma(b_2)}} \geq \recip{2} \brac{\frac{|A_{\vb}|}{\sigma(b_2)}}^2.
$$
Since we have 
$$
|[X]_{\vb}| = \sigma(b_2)\brac{ \frac{ X}{b_1W} + O(1)},
$$
it finally follows that 
$$
\sum_n 1_{A_{\vb}}(n)\nu_{\vb}(n) \gg \delta^2 \brac{\frac{X^2}{b_1^2 W}} \gg \delta^2 N_{\vb},
$$
as required to complete the proof of the lemma.
\end{proof}

\section{Exponential sum estimates}\label{sec:ExpSums} 

Henceforth we fix a choice of $\vb$ afforded by Lemma \ref{lem:MandarinDuck} and denote both $\nu_{\vb}$ and $N_{\vb}$ by $\nu$ and $N$, respectively.  
For $z \in [W]$ such that $z^2 +b_2 \equiv 0\bmod{W}$, write
$$
S_q(a,z) := \sum_{r=1}^q e\brac{\frac{a(Wr^2 + 2zr+\frac{z^2+b_2}{W})}{q}} \quad \text{and}\quad
I(\beta) := \int_0^{N} e\bigbrac{\beta t }\intd t.
$$
Bearing these quantities in mind, we may now establish the following result.

\begin{lemma}[Major arc asymptotic]\label{lem:MajorArc}
Suppose that $\norm{q\alpha} = |q\alpha - a|$ for 
 $q,a\in \ZZ$ with $q>0$.  Then
\begin{equation*}
\begin{split}
 \hat{\nu}(\alpha) =~& \frac{1}{\sigma(b_2)} 
  \hspace{-0.2cm}
 \sum_{\substack{z \in [W]\\ z^2 +b_2 \equiv 0 \bmod W}} 
 \hspace{-0.3cm}
 q^{-1} S_q(a, z)I(\alpha - \tfrac{a}{q})  
 + O\brac{\sqrt{NW}(q + N \norm{q\alpha})}.
\end{split}
\end{equation*}
\end{lemma}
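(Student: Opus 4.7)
The plan is to reparametrize $\hat\nu(\alpha)$ through the bijection $n \leftrightarrow y$ with $n = (y^2+b_2)/W$. On the support of $\nu$ one has $\nu(n) = 2y/\sigma(b_2)$, with $y$ an integer in $[1, X/b_1]$ satisfying $y^2 + b_2 \equiv 0 \bmod W$. This yields
\[
\hat\nu(\alpha) = \frac{2}{\sigma(b_2)} \sum_{\substack{1 \leq y \leq X/b_1 \\ y^2 + b_2 \equiv 0 \bmod W}} y \cdot e\!\left(\alpha \frac{y^2+b_2}{W}\right).
\]
The weight $2y$ is precisely the Jacobian of the substitution $u = y^2$, which is the structural feature letting the sum behave like the Fourier transform of the indicator function of $[0,N]$ in the $u$-variable.

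Next I would partition by residues $z \in [W]$ with $z^2 + b_2 \equiv 0 \bmod W$ and write $y = Wr + z$, so that $(y^2+b_2)/W = Wr^2 + 2zr + (z^2+b_2)/W$ is an integer-valued polynomial in $r$. Setting $\alpha = a/q + \beta$ and $r = qs + r'$ with $r' \in [q]$, the rational phase $e(a(y^2+b_2)/(qW))$ becomes independent of $s$, and summing over $r'$ reproduces $S_q(a,z)$. This reduces matters to estimating, for each $(r',z)$, the inner sum
\[
T(r',z,\beta) := \sum_{s} f\bigl(W(qs+r')+z\bigr), \qquad f(y) := y\, e\!\left(\beta \frac{y^2+b_2}{W}\right),
\]
in which the sampled $y$ lie in an arithmetic progression of common difference $Wq$.

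The crux is to compare $T(r',z,\beta)$ with $(Wq)^{-1}\int_0^{X/b_1} f(y)\,dy$, with error controlled by $O\bigl(\max|f| + \int_0^{X/b_1}|f'(y)|\,dy\bigr)$ via a standard Riemann sum / Euler--Maclaurin estimate. On the range one has $|f(y)| \leq y \leq \sqrt{NW}+O(1)$ and $|f'(y)| \ll 1 + |\beta|y^2/W \ll 1 + |\beta|N$, giving a per-inner-sum error of $O(\sqrt{NW}(1+|\beta|N))$. The integral itself is evaluated by the substitution $u = y^2$ followed by $t = u/W$, yielding $\tfrac{W}{2}\, e(\beta b_2/W) I(\beta) + O(W)$ by the definition \eqref{NDefn} of $N$.

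Assembling these pieces and summing over $r' \in [q]$ and the $\sigma(b_2)$ admissible $z$ produces a main term of the form $\sigma(b_2)^{-1} e(\beta b_2/W) \sum_z q^{-1} S_q(a,z) I(\beta)$ plus a cumulative error $O(q\sqrt{NW}(1+N|\beta|)) = O(\sqrt{NW}(q + N\|q\alpha\|))$, using $\|q\alpha\| = q|\beta|$. The spurious phase $e(\beta b_2/W) = 1 + O(|\beta|)$ introduces a further error $O(|\beta|N)$ in the main term, and this is absorbed into $\sqrt{NW}\cdot N\|q\alpha\|$ because $\sqrt{NW}\,q \geq 1$. The main technical obstacle I anticipate is executing the Riemann-sum comparison cleanly enough to obtain the stated error shape $O\bigl(\sqrt{NW}(q+N\|q\alpha\|)\bigr)$ and verifying that the stray factor $e(\beta b_2/W)$ is harmlessly absorbed, which is routine but requires some bookkeeping.
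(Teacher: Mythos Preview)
Your proposal is correct and follows essentially the same route as the paper: reparametrize $\hat\nu(\alpha)$ via $n=(y^2+b_2)/W$, split by residues $z\bmod W$ and then $r'\bmod q$, apply Euler--Maclaurin to the remaining sum, evaluate the integral by the quadratic substitution, and absorb the stray phase $e(\beta b_2/W)$. The only cosmetic differences are that the paper extracts $e(\alpha b_2/W)$ at the outset (rather than $e(\beta b_2/W)$ after integrating) and disposes of it via $e(\beta b_2/W)I(\beta)=\int_0^N e(\beta(t+b_2/W))\,dt = I(\beta)+O(1)$, which is marginally cleaner than your $e(\beta b_2/W)=1+O(|\beta|)$ but equivalent in effect.
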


\begin{proof}  Recalling \eqref{FourierDef} and \eqref{NuDef}, we break the Fourier transform into congruence classes modulo $W$ to give
\begin{align*}
\hat{\nu}(\alpha) &= \frac{1}{\sigma(b_2)}\sum_{\substack{ x \leq Xb_1^{-1}\\ x^2 +b_2 \equiv 0 \bmod W}}  2x\ e\bigbrac{\alpha (x^2+ b_2)/W}\\
& = \frac{e(\alpha b_2/W)}{\sigma(b_2)}\sum_{\substack{z \in [W]\\ z^2 +b_2 \equiv 0 \bmod W}}\sum_{z + Wy \leq Xb_1^{-1}}  2(z+Wy)\ e\bigbrac{\alpha (z+Wy)^2/W}.
\end{align*}
We now write  $\alpha =  \frac{a}{q}+ \beta$ and break the sum over $y$ into residue classes modulo $q$. This gives
\begin{align*}
\sum_{z + Wy \leq Xb_1^{-1}}  &2(z+Wy)\ e\bigbrac{\alpha (z+Wy)^2/W}\\
  &= \sum_{r=1}^q  \sum_{\substack{z + Wy \leq Xb_1^{-1}\\ y \equiv r \bmod q}} 2(z+Wy)\ e\bigbrac{(\tfrac{a}{q} + \beta)
  (z+Wy)^2/W} \\
&=\sum_{r=1}^q e\bigbrac{a(z+Wr)^2/(qW)} \sum_{z + Wr + Wq x \leq Xb_1^{-1}} 
\phi(x),
\end{align*}
where
$
\phi(x) := 2(z + Wr + Wq x)\ e\bigbrac{\beta(z + Wr + Wq x)^2/W}.
$

We shall use Euler--Maclaurin to estimate the inner sum over $x$ (in the form \cite[Eq.~(4.8)]{vaughan97}, for example).
Taking $Y_0=(Xb_1^{-1}-(z+Wr))/(Wq)$ and $X_0=-(z+Wr)/(Wq)$, this yields
$$
\sum_{X_0< x\leq Y_0} \phi(x)=\int_{X_0}^{Y_0} \phi(t)\intd t -\left[\phi(t) \psi(t)\right]^{Y_0}_{X_0} +\int_{X_0}^{Y_0} \phi'(t) \psi(t)\intd t,
$$ 
where 
 $\psi(t)=\{t\}-\tfrac{1}{2}=O(1)$.  Next we make the change of variables 
$u=(z+Wr+Wqt)^2/W$, under which the first term becomes
$$
\int_{X_0}^{Y_0} \phi(t)\intd t=
q^{-1}\int_0^{X^2/(b_1^2W)} e(\beta t)\intd t=
q^{-1}(I(\beta)+O(1)).
$$
Next, on noting that $|\phi(t)|\leq 2Xb_1^{-1}$ on $(X_0,Y_0]$ and 
$$
\sup_{t\in (X_0,Y_0]} |\phi'(t)| \ll W(q+N\|q\alpha\|),
$$
we find that the remaining two  terms are
\begin{align*}
&\ll \frac{X}{b_1} +\frac{X}{b_1Wq} W(q+N\|q\alpha\|)\ll \frac{\sqrt{NW}}{q}(q+N\|q\alpha\|).
\end{align*}
Inserting this into our previous estimate, we  conclude that
\begin{equation*}
\begin{split}
 \hat{\nu}(\alpha) =~& \frac{e(\alpha b_2/W)}{\sigma(b_2)} \sum_{\substack{z \in [W]\\ z^2 +b_2 \equiv 0 \bmod W}} q^{-1} I(\alpha-\tfrac{a}{q}) 
 \sum_{r=1}^q e\bigbrac{a(z+Wr)^2/(qW)}
  \\
 &\quad + O\brac{\sqrt{NW}(q + N \norm{q\alpha})}.
\end{split}
\end{equation*}
Now the inner exponential sum over $r$ is equal to $S_q(a,z)e(-ab_2/(qW))$. The lemma therefore follows on noting that 
\begin{align*}
e(\alpha b_2/W)e(-ab_2/(qW))I(\alpha-\tfrac{a}{q})
&=\int_0^N e\left((\alpha-\tfrac{a}{q}) (t+\tfrac{b_2}{W})\right)\intd t\\ &= I(\alpha-\tfrac{a}{q})+O(1),
\end{align*}
since $b_2/W\in [0,1]$.
\end{proof}

We need the following basic estimates for the quantities featuring in Lemma \ref{lem:MajorArc}.

\begin{lemma}\label{lem:estimate-SI}
For $(a,q) = 1$ we have 
$$
|S_q(a,z)| \leq 2 \sqrt{q} \quad \text{ and } \quad 
I(\beta)\leq \min\left\{ N, \frac{1}{ \|\beta\|} \right\}.  
$$
\end{lemma}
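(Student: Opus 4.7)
The bound on $I(\beta)$ is immediate. The estimate $|I(\beta)|\leq N$ follows from $|e(\beta t)|=1$; for $\beta\neq 0$, direct integration yields $I(\beta)=(e(\beta N)-1)/(2\pi i\beta)$, so
\[
|I(\beta)|=\frac{|\sin(\pi\beta N)|}{\pi|\beta|}\leq \frac{1}{\pi|\beta|}\leq \frac{1}{\|\beta\|},
\]
using $\|\beta\|\leq|\beta|$ (since $0\in\ZZ$).

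The bound on $S_q(a,z)$ is a quadratic Gauss sum estimate. Since $(z^2+b_2)/W$ is an integer (by $z^2\equiv -b_2\pmod{W}$), its contribution to the exponential is a unit-modulus phase, reducing matters to estimating
\[
T_q := \sum_{r=1}^q e\bigl((aWr^2+2azr)/q\bigr).
\]
I would apply the Weyl squaring trick: expanding $|T_q|^2$ and setting $h=r-r'$, the inner sum over $r'$ vanishes unless $q\mid 2aWh$. Using $(a,q)=1$, this restricts $h$ to a subgroup of order $d:=\gcd(q,2W)$, giving
\[
|T_q|^2 = q\sum_{k=0}^{d-1} e\bigl(aWqk^2/d^2 + 2azk/d\bigr).
\]

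It remains to show this residual sum has modulus at most $2$. The hypothesis $(b_2,W)=1$ combined with $z^2\equiv -b_2\pmod W$ forces $(z,W)=1$; and since $W$ is even while $b_2$ is odd, $z$ is also odd. Write $d=2^em$ with $m$ odd. Then $m\mid W$, and since $W=8\prod_{2<p\leq w}p$ has squarefree odd part, $m$ itself is squarefree. A direct manipulation shows the residual sum vanishes whenever $m>1$, essentially because the linear piece $\sum_k e(2azk/m)$ on the odd component vanishes by $(2az,m)=1$. This reduces us to $d=2^e$; since $W$ has $2$-part exactly $8$ we have $e\leq 4$, and a case-by-case geometric evaluation---exploiting the oddness of $z$---shows the remaining sum has modulus at most $2$ (and in fact vanishes for $e\geq 2$). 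Hence $|T_q|^2\leq 2q$, giving $|T_q|\leq\sqrt{2q}\leq 2\sqrt{q}$. The main obstacle is this last $2$-adic case analysis: one must compute the residual sum explicitly for each small $e$, rather than rely on the trivial bound $|T_q|^2\leq qd$, which fails when $e$ is large.
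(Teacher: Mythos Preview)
Your proof is correct and reaches the same bound $|S_q(a,z)|\leq\sqrt{2q}$, but via a different route from the paper. The paper does not square: it writes $r=r_1+q'r_2$ with $h=\gcd(q,W)$ and $q'=q/h$, so that the $r_2$-sum over $\Z/h\Z$ is a pure \emph{linear} character sum $\sum_{r_2}e(2azr_2/h)$. Since $(a,q)=1$ and $z^2\equiv -b_2\pmod W$ with $(b_2,W)=1$ force $(az,h)=1$, this linear sum vanishes unless $h\mid 2$; when $h\mid 2$ the remaining $r_1$-sum is a genuine Gauss sum modulo $q'$ with leading coefficient $aW'$ coprime to $q'$, and the classical estimate $\bigl|\sum_{r\bmod s}e((br^2+cr)/s)\bigr|\leq\sqrt{2s}$ finishes the job, giving $|S_q|\leq h\sqrt{2q'}\leq 2\sqrt q$.

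The trade-off: the paper's factorisation is shorter and sidesteps your $2$-adic case analysis entirely by reducing to a single cited Gauss-sum bound, whereas your squaring argument is more self-contained (you never invoke that bound) at the cost of checking $e=0,1,2,3,4$ by hand. Your CRT step is clean once one notes---as you implicitly do---that both the quadratic term $aWqk^2/d^2$ and the cross term become integers on the odd component because $m\mid W$ and $m\mid q$; this is the reason the odd part collapses to a pure linear sum. (The squarefreeness of $m$ that you mention is not actually needed.) For the even part, your claim that the residual sum equals $1,2,0,0,0$ for $e=0,1,2,3,4$ is correct; in the $e=4$ case one must distinguish $v_2(q)=4$ (where the quadratic term contributes a sign $(-1)^\gamma$) from $v_2(q)\geq 5$ (where it is trivial), but both subcases yield a vanishing geometric sum.
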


\begin{proof}
The estimate for $I(\beta)$ is standard and so we focus on
$S_q(a,z)$.
Let $h=(q,W)$ and put $q=hq'$ and $W=hW'$, with $(q',W')=1$.
We then write $r=r_1+q'r_2$ for $r_1$ running modulo $q'$ and $r_2$ running modulo $h$, finding that
\begin{align*}
S_q(a,z)
&=
e\brac{\frac{a(z^2+b_2)}{qW}} 
\sum_{r=1}^q e\brac{\frac{a(Wr^2 + 2zr)}{q}} \\
&=
e\brac{\frac{a(z^2+b_2)}{qW}} 
\sum_{r_1=1}^{q'} e\brac{\frac{a(W'r_1^2+2zr_1/h)}{q'}} 
\sum_{r_2=1}^{h}
e\brac{\frac{2azr_2}{h}}.
\end{align*}
The inner sum vanishes unless $h\mid 2az$. But this happens if and only if $h\mid 2$, since 
$z^2+b_2\equiv 0\bmod{W}$ and 
$(a,q)=(b_2,W)=1$.   Hence  $S_q(a,z)=0$ if $h\nmid 2$ and 
\begin{equation}\label{eq:flamingo}
S_q(a,z)
=
h\, e\brac{\frac{a(z^2+b_2)}{qW}} 
\sum_{r_1=1}^{q'} e\brac{\frac{a(W'r_1^2+2zr_1/h)}{q'}}.
\end{equation}
if $h\mid 2$.
Assuming that $h\mid 2$, 
we see that the desired bound for 
$S_q(a,z)$  is  an immediate consequence of the basic estimate
$$
\abs{\sum_{r=1}^{s} e\brac{\frac{br^2+cr}{s}}}\leq \sqrt{2s},
$$
which is valid for any positive integer $s$ and any integers $b,c$ with $b$ coprime to $s$.
The latter estimate follows on invoking the squaring and differencing approach found in the standard  analysis of Weyl sums. 
\end{proof}

Building on the proof of this result  we may establish the following. 

\begin{lemma}\label{lem:Sq}
If $q>1$ and  $q$ is $w$-smooth then
$$
\sum_{\substack{z \in [W]\\ z^2 +b_2 \equiv 0 \bmod W}}q^{-1}S_q(a,z)=0.
$$ 
\end{lemma}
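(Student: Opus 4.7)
The plan is to exploit formula \eqref{eq:flamingo} derived in the proof of Lemma \ref{lem:estimate-SI} in order to reduce the task to the single non-trivial case $q=2$, which can then be handled by a direct computation.

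First, I recall that \eqref{eq:flamingo} gives $S_q(a,z)=0$ unless $h:=\gcd(q,W)$ divides $2$. Assuming $q>1$ is $w$-smooth, every prime factor of $q$ is at most $w$ and hence divides $W=8\prod_{2<p\leq w}p$, so $h>1$. If $q$ has any odd prime factor, or if $4\mid q$ (using $8\mid W$), then $h\geq 3$, hence $h\nmid 2$ and $S_q(a,z)=0$ identically in $z$. This rules out every $w$-smooth $q>1$ except $q=2$.

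Next, specialising \eqref{eq:flamingo} to $q=2$ (so $h=2$, $q'=1$, $W'=W/2$), the inner sum over $r_1$ consists of a single term equal to $1$ since $a(W'+z)\in\Z$, and using that $a$ is odd (as $(a,2)=1$), I would derive
\[
S_2(a,z)=2\,e\!\left(\tfrac{a(z^2+b_2)}{2W}\right)=2(-1)^{c_z},\qquad c_z:=(z^2+b_2)/W\in\Z.
\]
The lemma therefore reduces to showing $\sum_{z\in V}(-1)^{c_z}=0$, where $V:=\{z\in[W]:z^2+b_2\equiv 0\bmod W\}$.

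The parity of $c_z$ is controlled by whether $z^2\equiv -b_2\bmod 16$. A crucial simplification is that $(z+8)^2\equiv z^2\bmod 16$, so $z^2\bmod 16$ depends only on $z\bmod 8$; thus the mod-$16$ condition refines only the $2$-component of the Chinese Remainder decomposition $V\cong V_8\times\prod_{2<p\leq w}V_p$. Finally, the condition $\sigma(b_2)\neq 0$ forces $b_2\equiv 7\bmod 8$, making $V_8$ the four odd residues modulo $8$; since odd squares modulo $16$ take exactly the values $\{1,9\}$, a short enumeration shows that precisely $2$ of these $4$ residues satisfy $z^2\equiv -b_2\bmod 16$, for either admissible value of $b_2\bmod 16\in\{7,15\}$. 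This yields the equal split of $V$ between $c_z$ even and $c_z$ odd, and hence the required cancellation. The main obstacle is this final $2$-adic enumeration, which is elementary but warrants some care.
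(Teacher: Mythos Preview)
Your proof is correct and the reduction to $q=2$ is identical to the paper's. For the remaining $q=2$ case the paper takes a marginally slicker route: writing $W=8\tilde W$ with $\tilde W$ odd and substituting $z=u+4\tilde W v$ with $u$ running modulo $4\tilde W$ and $v$ modulo $2$, one checks that $e\bigl((z^2+b_2)/(2W)\bigr)=e\bigl((u^2+b_2)/(2W)\bigr)\,e(uv/2)$ and that the constraint $z^2+b_2\equiv 0\bmod W$ depends only on $u$; since $u$ must be odd, the inner sum $\sum_{v=0}^{1}e(uv/2)=1+(-1)^u$ vanishes outright. Your mod-$16$ enumeration reaches the same conclusion by explicitly counting how the four odd residues modulo $8$ split according to $z^2\bmod 16$, which in effect verifies by hand that the involution $z\mapsto z+4\tilde W$ on $V$ flips the parity of $c_z$. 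The two arguments are equivalent in strength and length; the paper's substitution just sidesteps the case analysis on $b_2\bmod 16$.
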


\begin{proof}
Suppose that $q$ is  $w$-smooth and bigger than $1$.
We saw in the proof of the previous result that $S_q(a,z)=0$ unless $(q,W)\mid 2$. 
This implies that $q=2^h$ for some $h\geq 1$, since $q$ is $w$-smooth.
Since $8\mid W$ we must have $\min\{h,3\}=2$, whence 
 $q=2$ and  $a=1$. Substituting these values into \eqref{eq:flamingo}, we deduce that 
$$
S_2(1,z)=2e\left(\frac{z^2+b_2}{2W}\right).
$$
Let $\tilde W$ be the odd integer such that $W=8\tilde W$.
We now introduce the sum over $z$, writing $z=u+4\tilde W v$ for $u$ running modulo $4\tilde W$ and $v$ running modulo $2$. In particular, we have 
$$
e\left(\frac{z^2+b_2}{2W}\right)=e\left(\frac{u^2+b_2}{2W}\right)e\left(\frac{uv}{2}\right)
$$
and 
$z^2 +b_2 \equiv 0\bmod{W}$ if and only if $u^2 +b_2 \equiv 0 \bmod{W}$. 
We must have $2\nmid u$ since $z^2+b_2\equiv 0\bmod{W}$ and $(b_2,W)=1$. But then 
$$
\sum_{\substack{z \in [W]\\ z^2 +b_2 \equiv 0 \bmod W}}q^{-1}S_q(a,z)=
\sum_{\substack{u \bmod{4\tilde W}\\ u^2 +b_2 \equiv 0 \bmod W}}
e\left(\frac{u^2+b_2}{2W}\right)
\sum_{v\bmod{2}}
e\left(\frac{uv}{2}\right)=0,
$$ 
as required.
\end{proof}

Recalling  \eqref{eqn:WhooperSwan}, we have  the trivial estimate $\hat{\nu}(\alpha)\ll N$.  Our next result 
 is a standard Weyl bound, showing how one can beat the trivial estimate when $\alpha$ is not well approximated by a rational with small denominator.

\begin{lemma}[Weyl bound] 
\label{lem:Weyl}
 Suppose that $b_1 W \leq X$ and $\norm{q\alpha} = |q\alpha - a|$ for  coprime $a, q \in \Z$ with $q>0$.  Then
\begin{align*}
\abs{\hat{\nu}(\alpha)} \ll~& N \sqrt{W\log N}\\
&\times 
\brac{\brac{WN}^{-1/2} +  \norm{q\alpha} + qN^{-1} + \min\set{q^{-1},(\norm{q\alpha}N )^{-1}}}^{1/2}.
\end{align*}
\end{lemma}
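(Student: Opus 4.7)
The plan is to bound $\hat\nu(\alpha)$ by a classical Weyl-differencing argument, carefully tracking the extra factors of $W$ introduced by the $W$-trick. First, I would unfold $\hat\nu(\alpha)$ via the residue decomposition used in the proof of Lemma \ref{lem:MajorArc}: parametrising the square roots by $y = z + Wm$ with $z \in [W]$, $z^2 \equiv -b_2 \pmod W$, and $0 \leq m \leq M_z$, where $M_z \asymp M := X/(b_1W) \asymp \sqrt{N/W}$ (the hypothesis $b_1 W \leq X$ ensures $M \geq 1$). Expanding $\alpha(z+Wm)^2/W = \alpha z^2/W + 2\alpha zm + \alpha W m^2$ and applying Abel summation in $m$---the weight $2(z+Wm)$ has size $O(X/b_1) = O(\sqrt{WN})$ with total increment $O(X/b_1)$---reduces matters to bounding the pure Weyl sums
$$
B(z) := \max_{M_0 \leq M} \Bigl|\sum_{m=1}^{M_0} e\bigl(\alpha W m^2 + 2\alpha zm\bigr)\Bigr|,
$$
yielding $|\hat\nu(\alpha)| \ll \sqrt{WN}\cdot \max_z B(z)$.

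Next, I would apply classical Weyl differencing to $B(z)$---the linear phase $2\alpha zm$ vanishes under the squaring step---to obtain
$$
B(z)^2 \ll M + \sum_{h=1}^{M} \min\!\Bigl(M,\, \tfrac{1}{\|2\alpha Wh\|}\Bigr).
$$
Since $|\hat\nu(\alpha)|^2 \ll WN\cdot B(z)^2$, and the identifications $N \cdot (WN)^{-1/2} \asymp M$, $N \cdot q/N = q$, and $N\min\{q^{-1}, (\|q\alpha\|N)^{-1}\} = \min\{N/q, 1/\|q\alpha\|\}$ match the four terms on both sides, the target bound reduces to the Diophantine estimate
$$
\sum_{h=1}^M \min(M, 1/\|2\alpha Wh\|) \ll (\log N)\bigl(M + q + N\|q\alpha\| + \min\{N/q, 1/\|q\alpha\|\}\bigr).
$$

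To prove this, I would write $\delta := 2\alpha W = c/e + \eta$, where $e := q/(q, 2W)$, $(c,e) = 1$, and $|\eta| = 2W\|q\alpha\|/q$. Partitioning $[1, M]$ into blocks of length $e$ and applying a Vaughan-type argument, the contribution from $h \not\equiv 0 \pmod e$ is at most $(\log N)(M + q + N\|q\alpha\|)$: the $M$ and $q$ come from $M\log e$ and $e\log e$ respectively, while $N\|q\alpha\|$ absorbs the aggregate drift $M^2|\eta|$ across all blocks (using $(q, 2W) \leq 2W$). The contribution from $h = ke$ with $1 \leq k \leq M/e$ is bounded by $\sum_k \min(M, 1/(ke|\eta|))$, after a Dirichlet re-approximation of $\delta$ in the extreme case $Me|\eta| > 1/2$. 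A case split on whether $e|\eta| \lessgtr 1/M$ then shows this resonant sum is at most $(\log M)\min\{M^2/e,\, 1/(e|\eta|)\} \leq (\log N)\min\{N/q,\, 1/\|q\alpha\|\}$, using $M^2/e \leq 2N/q$ and $1/(e|\eta|) = (q,2W)/(2W\|q\alpha\|) \leq 1/\|q\alpha\|$.

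The main obstacle is this Diophantine estimate, particularly extracting the minimum $\min\{q^{-1}, (\|q\alpha\|N)^{-1}\}$. The factor $(q, 2W)$, which can be as large as $2W$, must be carefully tracked through both $e$ and $\eta$; and one must verify that the small-drift regime (where each resonant term saturates at $M$, giving total $\asymp N/q$) and the large-drift regime (where the harmonic series $\sum_k 1/(ke|\eta|)$ converges to $O(\log N / \|q\alpha\|)$) together capture the minimum in all cases. Ensuring that all auxiliary terms---the trivial $M$ from Weyl differencing, the logarithmic losses, and the constants depending on $(q, 2W)$---are absorbed by the prefactor $N\sqrt{W\log N}$ is the most delicate piece of bookkeeping.
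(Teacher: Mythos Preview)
Your approach is essentially identical to the paper's: both decompose $\hat\nu$ into residue classes $z \bmod W$, remove the weight $2(z+Wy)$ by partial summation, and reduce to bounding the Weyl sum $\sum_{y \leq Y} e(\alpha(Wy^2+2zy))$ with $Y \asymp \sqrt{N/W}$. The only difference is that the paper invokes \cite[Lemma~3.3]{browningprendiville14} (a Weyl inequality for polynomials with large leading coefficient) as a black box at this point, whereas you carry out the Weyl differencing explicitly and then sketch a direct proof of the resulting Diophantine estimate
\[
\sum_{h\leq M}\min\bigl(M,\|2W\alpha h\|^{-1}\bigr)\ll(\log N)\Bigl(M+q+N\|q\alpha\|+\min\{N/q,\,\|q\alpha\|^{-1}\}\Bigr).
\]
Your sketch has the right architecture---the reduction modulo $e=q/(q,2W)$, the observation $M^2|\eta|\ll N\|q\alpha\|$, and the dichotomy on $e|\eta|$ for the resonant terms are exactly what the proof of \cite[Lemma~3.3]{browningprendiville14} uses---but the large-drift regime $Me|\eta|>1/2$ is where the work lies: your ``aggregate drift'' bound for the non-resonant contribution and the Dirichlet re-approximation for the resonant contribution would each need a careful paragraph to make rigorous. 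Citing the external lemma, as the paper does, sidesteps this bookkeeping entirely.
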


\begin{proof}
As in the proof of Lemma \ref{lem:MajorArc}, we have
$$
|\hat{\nu}(\alpha)| \leq 
\frac{1}{\sigma(b_2)}
 \sum_{\substack{z \in [W]\\ z^2 +b_2 \equiv 0 \bmod W}}\abs{\sum_{z + Wy \leq Xb_1^{-1}}  2(z+Wy)\ e\bigbrac{\alpha (Wy^2 + 2zy)}}.
$$
By partial summation and the assumption that $b_1 W \leq X$, one sees that
\begin{multline}\label{eqn:PartialSum}
\abs{\sum_{z + Wy \leq Xb_1^{-1}}  2(z+Wy)\ e\bigbrac{\alpha (Wy^2 + 2zy)}}\\ \ll Xb_1^{-1} \sup_{ Y \leq X/(b_1W)} \abs{\sum_{0 \leq y \leq Y}   e\bigbrac{\alpha (Wy^2 + 2zy)}}.
\end{multline}

Let us denote temporarily by $S(Y)$ the inner sum in \eqref{eqn:PartialSum}.  
This is an exponential sum over a polynomial with unbounded height and we  require a version of Weyl's inequality applicable to this situation. 
A standard Weyl differencing argument gives that $ S(Y)$ is
\begin{align*} 
 \ll Y(\log Y)^{1/2}
\brac{Y^{-1} + \norm{q\alpha} W + qY^{-2} + \min\set{Wq^{-1}, (\norm{q\alpha}Y^2)^{-1}}}^{1/2}.
\end{align*}
(cf.  \cite[Lemma 3.3]{browningprendiville14} with  $(d,n,\sigma)=(2,1,0)$ and $(P,H)=(Y,W)$ in the notation therein.) %T%
One can check that this bound for $S(Y)$ is increasing in $Y$, and so it is maximised when $Y = X/(b_1W)$.  Since $\sqrt{N/W} \ll X/(b_1W) \ll \sqrt{N/W}$, we conclude that
the right hand side of 
\eqref{eqn:PartialSum} is 
$$
\ll Xb_1^{-1}\cdot \sqrt{N\log N \brac{(NW)^{-1/2} + \norm{q\alpha}  + qN^{-1} + \min\set{q^{-1}, (\norm{q\alpha}N)^{-1}}}}.
$$
The statement of the lemma is now obvious.
\end{proof}

Note that $b_1W\leq X$ since $b_1\leq X^{1/2}$ and $W\ll_\varepsilon X^{\varepsilon}$ by \eqref{Wbound}, and so the hypothesis of the preceding result is met. 
We end this section by showing that our majorant function has suitable Fourier decay.

\begin{lemma}\label{eq:bounded}
We have the estimate
$$
\|\hat \nu - \hat 1_{[N]}\|_\infty \ll  \frac{N}{\sqrt{w}}
$$
\end{lemma}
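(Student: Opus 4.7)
The plan is to dissect the frequency torus $[0,1)$ into major and minor arcs at threshold $P := wW\log N$ and to invoke Dirichlet's theorem at level $Q := N/P$: for every $\alpha$ there exist coprime integers $a,q$ with $1 \leq q \leq Q$ and $|q\alpha - a| \leq 1/Q = P/N$. The minor arcs will consist of those $\alpha$ forcing $q > P$, and the major arcs of those for which such an $(a,q)$ exists with $q \leq P$.

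On the minor arcs, Lemma \ref{lem:Weyl} applies cleanly: each of the four terms $(WN)^{-1/2}$, $\|q\alpha\|$, $q/N$, $1/q$ is $O(1/P)$, and so the bracket is $O(1/P)$, yielding $|\hat\nu(\alpha)| \ll N\sqrt{W\log N/P} = N/\sqrt{w}$. Simultaneously, a minor-arc $\alpha$ cannot be $P/N$-close to any integer (else it would sit in a major arc with $q=1$), so $\|\alpha\| > P/N$ and $|\hat 1_{[N]}(\alpha)| \leq 1/(2\|\alpha\|) \ll N/P \ll N/\sqrt{w}$.

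On the major arcs, Lemma \ref{lem:MajorArc} yields $\hat\nu(\alpha) = (\text{main term}) + O(\sqrt{NW}\,P)$, and this error is already $\ll N/\sqrt{w}$ once one notes the crude inequality $w^3W^3(\log N)^2 \ll N$ (routine from $W \leq X^{\varepsilon}$ and $N \gg X^{1-\varepsilon}$). It remains to control the main term, splitting on $q$. When $q = 1$, a direct computation gives $S_1(a,z) = 1$ using $z^2 + b_2 \equiv 0 \bmod W$, so the main term collapses to $I(\alpha - a)$; this is matched by $\hat 1_{[N]}(\alpha) = I(\alpha - a) + O(1 + N|\alpha - a|) = I(\alpha - a) + O(P)$ via a standard comparison of sum and integral, and the two contributions cancel up to an error $O(\sqrt{NW}\,P)$. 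When $q > 1$ and $q$ is $w$-smooth, Lemma \ref{lem:Sq} annihilates the main term entirely. When $q > 1$ and $q$ is not $w$-smooth, then $q > w$ automatically, and the bound $|S_q(a,z)| \leq 2\sqrt{q}$ from Lemma \ref{lem:estimate-SI} yields $|\text{main term}| \leq 2N/\sqrt{q} < 2N/\sqrt{w}$. In these last two sub-cases, coprimality of $(a,q)$ with $q \geq 2$ forces $\|a/q\| \geq 1/q$, hence $\|\alpha\| \geq 1/(2q)$ and $|\hat 1_{[N]}(\alpha)| \ll q \leq P \ll N/\sqrt{w}$.

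The crux of the argument---and the only place where the $W$-trick is truly essential---is the annihilation of the main term for $w$-smooth $q > 1$ provided by Lemma \ref{lem:Sq}. Without it, small $w$-smooth denominators like $q = 2$ would contribute a main term of order $N$, far above $N/\sqrt{w}$, and the method would collapse. It is precisely this that permits the threshold $P$ to comfortably exceed $w$; the ultimate choice $P = wW\log N$ is then forced by the need to both absorb the $\sqrt{W\log N}$ loss in the Weyl bound and keep the major-arc error $\sqrt{NW}\,P$ below $N/\sqrt{w}$.
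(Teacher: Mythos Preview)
Your proof is correct and follows essentially the same strategy as the paper: a major/minor arc dissection, the Weyl bound (Lemma~\ref{lem:Weyl}) on the minor arcs, and on the major arcs the combination of the asymptotic of Lemma~\ref{lem:MajorArc}, the cancellation of Lemma~\ref{lem:Sq} for $w$-smooth $q>1$, and the $\sqrt{q}$-saving of Lemma~\ref{lem:estimate-SI} for the remaining $q>w$. The only genuine difference is the choice of cut-off: the paper takes a fixed power $N^{\tau}$ with $\tau=1/100$, giving a power-saving minor-arc bound $N^{1-\tau/4}$ that is far stronger than needed, whereas you choose the tighter threshold $P=wW\log N$ so that the Weyl bound lands exactly on $N/\sqrt{w}$; correspondingly your major-arc error $\sqrt{NW}\,P$ is smaller than the paper's $N^{1/2+3\tau}$, but both are comfortably absorbed. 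Your closing remark that Lemma~\ref{lem:Sq} is the crux is accurate and matches the paper's reasoning.
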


\begin{proof}
Let $\tau=\frac{1}{100}.$  We define a set of major arcs $\mathfrak{M}$ to be the union of 
all intervals
$$
\mathfrak{M}(r,b)=\{\alpha\in \T: |\alpha-b/r|\leq N^{-1+\tau}\}
$$
for coprime integers $b,r$ such that $0\leq b< r\leq N^\tau$.  Now let $\alpha\not\in \mathfrak{M}.$ By Dirichlet's approximation theorem we can find coprime integers $a,q$ such that 
$q\leq N^{\tau}$ and 
$|\alpha-\frac{a}{q}|\leq q^{-1}N^{-\tau}$.
Since $\alpha$ is  not on the major arcs we must have 
 $|q\alpha-a|>qN^{-1+\tau}$. It therefore  follows from Lemma \ref{lem:Weyl}
that
\begin{equation}\label{minorarcbound}
\hat \nu(\alpha)\ll N^{1-\tau/2}\sqrt{W\log N}.
\end{equation}
Since 
$$
\hat 1_{[N]}(\alpha)=\sum_{1\leq n \leq N} e(\alpha n)\ll \frac{q}{\|q\alpha\|}\ll N^{1-\tau}, 
$$
and $\sqrt{W\log N}\ll N^{\tau/4}$ by \eqref{Wbound}, 
we conclude that 
$$
\abs{\hat \nu(\alpha) - \hat 1_{[N]}(\alpha)}\ll N^{1-\tau/4}
$$
for any $\alpha\not\in \mathfrak{M}.$ 

Next let $\alpha\in \mathfrak{M}(q,a)$ for coprime $a,q\in \ZZ$ with  $0\leq a< q\leq N^\tau$.
According to Lemma \ref{lem:MajorArc} we have 
\begin{equation}\label{eq:parrot}
 \hat{\nu}(\alpha) = \frac{1}{\sigma(b_2)} \sum_{\substack{z \in [W]\\ z^2 +b_2 \equiv 0 \bmod W}} q^{-1} S_q(a, z)I(\alpha - \tfrac{a}{q})  
 + O\brac{N^{1/2+3\tau}},
\end{equation}
since
$
q + N \norm{q\alpha}\ll q+N\cdot qN^{-1+\tau}\ll qN^{\tau}\ll N^{2\tau}
$
and $W\leq N^\tau$. 
In a similar fashion (see  \cite[Lemma 2.7]{vaughan97}, for example), one finds that  
$$
\hat 1_{[N]}(\alpha)=
\begin{cases}
O(N^{2\tau}) &\text{if  $q>1$,}\\
I(\alpha)+O(N^{2\tau}) & \text{if $q=1$.}
\end{cases}
$$
The 
 main term in \eqref{eq:parrot} has absolute value 
 $$
 \leq \frac{N}{\sigma(b_2)} \left|\sum_{\substack{z \in [W]\\ z^2 +b_2 \equiv 0 \bmod W}} q^{-1} S_q(a, z)\right|,
  $$
since $|I(\beta)|\leq N$.
Lemma \ref{lem:Sq} implies that this vanishes unless either $q=1$ 
 or $q$ is not $w$-smooth. In the latter case we may conclude that $q>w$, whence an application of 
Lemma \ref{lem:estimate-SI} shows that the 
 main term has absolute value at most
$
2N/{\sqrt{q}} \leq 2N/\sqrt{w}.
$
It follows that 
$$
\abs{\hat \nu(\alpha) - \hat 1_{[N]}(\alpha)}\ll 
\frac{N}{\sqrt{w}}+  N^{1/2+3\tau}+ N^{2\tau}\ll \frac{N}{\sqrt{w}} 
$$
for any $\alpha\in \bigcup_{q\neq 1}\bigcup_{a\bmod{q}}\mathfrak{M}(q,a)$.

Finally, when $q=1$ and $\alpha\in \mathfrak{M}(1,0)$, we deduce from \eqref{eq:parrot} that 
$$
 \hat{\nu}(\alpha) =  I(\alpha )  
 + O\brac{N^{1/2+3\tau}}, 
$$
since $S_1(0,z)=1$.
Hence 
$$
\abs{\hat \nu(\alpha) - \hat 1_{[N]}(\alpha)}\ll 
 N^{1/2+3\tau}
$$
for any  $\alpha\in \mathfrak{M}(1,0)$.
This completes the proof of the lemma.
\end{proof}

\section{The restriction estimate}\label{sec:restriction}

We require an analogue of Bourgain's restriction estimate \cite{bourgain89} for the  $W$-tricked quadratic exponential sum. The proof of the following result closely follows the  argument in \cite[\S 4]{bourgain89}. 

\begin{proposition}[Restriction bound]\label{thm:RestrictionBound}  For any $p > 4$ there exists an absolute constant $C_p$ such that for any function $\phi : \ZZ \to \CC$ satisfying
$|\phi| \leq \nu$ we have the bound
$$
\int_\T |\hat{\phi}(\alpha)|^p \intd \alpha \leq C_p N^{p-1}.
$$
\end{proposition}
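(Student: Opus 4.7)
My plan is to follow Bourgain's level-set argument for the classical restriction estimate of the squares \cite[\S 4]{bourgain89}, modified to accommodate the extra $W$-factors appearing in our majorant. The starting point is the observation that, by the layer-cake representation, it suffices to establish the restricted weak-type bound
\begin{equation*}
|E_\lambda| \ll_p \lambda^{-p} N^{p-1}, \qquad E_\lambda := \{\alpha \in \T : |\hat\phi(\alpha)| > \lambda\},
\end{equation*}
uniformly in $\lambda > 0$ and $|\phi| \leq \nu$, for every $p > 4$; integrating $p\int_0^\infty \lambda^{p-1}|E_\lambda|\intd\lambda$ over a suitable dyadic range then yields the proposition. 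I split into two ranges of $\lambda$.

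In the low range $\lambda \leq N^{3/4}$, the bound is an immediate consequence of Chebyshev and the crude $L^2$ estimate
\begin{equation*}
\int_\T |\hat\phi(\alpha)|^2\intd\alpha = \|\phi\|_2^2 \leq \|\nu\|_\infty\|\nu\|_1 \ll \sqrt{WN}\cdot N \ll N^{3/2+\varepsilon},
\end{equation*}
where I used the explicit form of $\nu$ from \eqref{NuDef}, the $L^1$ bound \eqref{eqn:WhooperSwan}, and $W \ll N^\varepsilon$ from \eqref{Wbound}. A routine calculation then confirms $\lambda^{-2}N^{3/2+\varepsilon} \ll \lambda^{-p}N^{p-1}$ throughout this range whenever $p > 4$.

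In the high range $\lambda > N^{3/4}$, I extract a maximal $N^{-1}$-separated subset $\Lambda \subset E_\lambda$, so $|E_\lambda| \ll |\Lambda|/N$, and choose unimodular $c_j$ with $c_j\hat\phi(\alpha_j) = |\hat\phi(\alpha_j)|$. Writing $T(n) = \sum_{\alpha_j \in \Lambda} c_j e(-\alpha_j n)$, duality and Cauchy--Schwarz against $\nu$ give
\begin{equation*}
\lambda^2|\Lambda|^2 \leq \Bigl(\sum_n \phi(n)T(n)\Bigr)^2 \ll \|\nu\|_1 \sum_n \nu(n)|T(n)|^2 \ll N\sum_{i,j} c_i\bar c_j \hat\nu(\alpha_j - \alpha_i).
\end{equation*}
The diagonal $i=j$ contributes $\ll N^2|\Lambda|$. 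For the off-diagonal part, I apply Dirichlet's approximation theorem to each difference, writing it as $a/q + \beta$ with $q$ small, and invoke the Weyl bound of Lemma \ref{lem:Weyl}, which decays in both $q$ and $N|\beta|$. Grouping pairs dyadically by their Dirichlet data and using the $N^{-1}$-separation of $\Lambda$ to count pairs in each bin ultimately yields $|\Lambda| \ll N^{2-\delta}/\lambda^2$ for some $\delta = \delta(p) > 0$, and hence $|E_\lambda| \ll N^{1-\delta}/\lambda^2 \ll \lambda^{-p}N^{p-1}$ in this range.

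The main obstacle is the combinatorial pigeonholing in the high range: one must turn the pointwise Weyl bound into an efficient count of pairs $(\alpha_i, \alpha_j) \in \Lambda^2$ whose difference lies in a prescribed narrow major arc, and then sum dyadically over both the denominator $q$ and the distance $|\beta|$. This is exactly Bourgain's extraction lemma, which passes to a well-structured sub-collection of $\Lambda$ on which the counts are cleanly summable. Our setting differs from Bourgain's only by factors of $\sqrt{W\log N}$ in the Weyl bound and by the modified major arc expansion recorded in Lemma \ref{lem:MajorArc}; since $W \ll N^\varepsilon$ by \eqref{Wbound}, each such factor contributes at most $N^{O(\varepsilon)}$, and Bourgain's argument transfers with essentially no modification.
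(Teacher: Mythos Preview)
Your overall scheme---layer-cake reduction, discretise the large level set to a $1/N$-separated set $\Lambda$, Cauchy--Schwarz against $\nu$, then feed the resulting bilinear sum into Bourgain's combinatorics---is exactly the paper's route. But two of your steps do not hold as written.

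First, the high-range bound $|\Lambda| \ll N^{2-\delta}/\lambda^2$ is false. Take $\phi = \nu$ and $\lambda = \tfrac{1}{2}\|\nu\|_1 \sim N/2$; then $0\in E_\lambda$, so $|\Lambda|\geq 1$, yet $N^{2-\delta}/\lambda^2 \asymp N^{-\delta}\to 0$. What Bourgain's method actually yields is $|\Lambda|\ll_\eps (N/\lambda)^{4+\eps}$, which is precisely the paper's Lemma~\ref{lem:measure}. The step you are missing is H\"older: from Cauchy--Schwarz one has $\delta^2 N R^2 \ll \sum_{r,r'}|\hat\nu(\theta_r-\theta_{r'})|$ (with $\delta=\lambda/N$, $R=|\Lambda|$), and the paper then applies H\"older with exponent $\gamma>2$ to obtain $\delta^{2\gamma}N^\gamma R^2 \ll \sum_{r,r'}|\hat\nu(\theta_r-\theta_{r'})|^\gamma$. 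Only now does one insert the major-arc bound $|\hat\nu(\alpha)|\ll Nq^{-1/2}(1+N|\alpha-a/q|)^{-1}$ and arrive at Bourgain's kernel $G(\alpha)=\sum_{q\leq Q}\sum_{(a,q)=1}q^{-\gamma/2}(1+N|\sin(\alpha-a/q)|)^{-\gamma}$; it is the condition $\gamma>2$ that makes both the $q$-sum and the $\beta$-decay summable in Bourgain's subsequent analysis. Without the H\"older step no amount of dyadic pigeonholing on the raw sum $\sum|\hat\nu|$ will close, since $\sum_q \phi(q)q^{-1/2}$ diverges.

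Second, the paper does not rely on the $L^2$ estimate for the easy range but on a fourth-moment bound $\|\hat\phi\|_4^4 \leq N^{3+C/\log\log N}$ proved via the divisor function (Lemma~\ref{lem:4th-moment}). This distinction matters: the hard-range argument requires $\delta \gg N^{-c}$ for a small absolute $c$ (in the paper, $c=\tau/8=1/800$) in order to absorb the minor-arc contribution $|\hat\nu|\ll N^{1-\tau/4}$ after H\"older. Your $L^2$ bound only disposes of $\delta \leq N^{-1/4}$ when $p$ is close to $4$, leaving the interval $\delta\in(N^{-1/4},N^{-1/800})$ uncovered. The fourth moment, with its $N^{o(1)}$ loss, reaches all the way up to the threshold where the Bourgain argument takes over.
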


The  
following result is concerned with calculating the 
fourth moment of $\hat \phi$, since it plays an important role in the proof of Proposition~\ref{thm:RestrictionBound}. 

\begin{lemma}\label{lem:4th-moment}
There exists an absolute constant $C$ such that for any function $\phi : \ZZ \to \CC$ satisfying
$|\phi| \leq \nu$ we have the bound
$$
\int_\T |\hat{\phi}(\alpha)|^4 \intd \alpha \leq N^{3 + C/\log\log N}
$$
\end{lemma}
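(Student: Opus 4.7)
The plan is to expand the fourth moment via orthogonality, dominate $\phi$ pointwise by $\nu$, and then unfold the parametrisation of the support of $\nu$ through $n\leftrightarrow (y^2+b_2)/W$, reducing the estimate to a classical Diophantine count for equal sums of two squares.

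First I would apply orthogonality together with the hypothesis $|\phi|\leq\nu$ to obtain
\[
\int_\T |\hat\phi(\alpha)|^4\, \intd \alpha
= \sum_{n_1+n_2=n_3+n_4} \phi(n_1)\phi(n_2)\overline{\phi(n_3)\phi(n_4)}
\leq \sum_{n_1+n_2=n_3+n_4}\prod_{i=1}^4 \nu(n_i).
\]
Setting $Y:=X/b_1$ and unfolding \eqref{NuDef}, each $n$ in the support of $\nu$ corresponds to a unique $y\in[Y]$ with $y^2\equiv -b_2 \pmod{W}$ via $Wn=y^2+b_2$, and the weight is $\nu(n)=2y/\sigma(b_2)$. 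The linear constraint $n_1+n_2=n_3+n_4$ translates to $y_1^2+y_2^2=y_3^2+y_4^2$, so the right-hand side becomes
\[
\frac{16}{\sigma(b_2)^4}\sum_{\substack{y_i\leq Y,\ y_i^2\equiv -b_2\pmod{W}\\ y_1^2+y_2^2=y_3^2+y_4^2}} y_1y_2y_3y_4.
\]

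The next step is a Diophantine estimate. Dropping the congruence condition only enlarges the sum, and the inequality $y_1y_2\leq \tfrac{1}{2}(y_1^2+y_2^2)$ lets me replace the weight $y_1y_2y_3y_4$ by $n^2/4$ on the diagonal $n=y_1^2+y_2^2=y_3^2+y_4^2$. With $r_2(n)$ denoting the number of representations of $n$ as a sum of two positive squares, this gives
\[
\sum_{\substack{y_i\leq Y\\ y_1^2+y_2^2=y_3^2+y_4^2}} y_1y_2y_3y_4
\leq \sum_{n\leq 2Y^2} \tfrac{n^2}{4}\,r_2(n)^2
\ll Y^4 \sum_{n\leq 2Y^2} r_2(n)^2
\ll Y^6\log Y,
\]
where the final inequality is the classical estimate $\sum_{n\leq M}r_2(n)^2\ll M\log M$.

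Finally I would convert back to the parameters $N$ and $W$: by \eqref{NDefn}, $Y^2=NW+O(1)$, so $Y^6\ll(NW)^3$, whence
\[
\int_\T|\hat\phi(\alpha)|^4\,\intd\alpha \ll \frac{W^3\log N}{\sigma(b_2)^4}\,N^3.
\]
Chebyshev's bound for $\theta(w)$ together with \eqref{Wbound} yields $\log W\ll w=\sqrt{\log X}\ll\sqrt{\log N}$, so $W^3\ll\exp(O(\sqrt{\log N}))$; since $\sqrt{\log N}\log\log N=o(\log N)$, this factor is absorbed into $N^{C/\log\log N}$ for a suitable absolute constant $C$. The only substantive input here is the classical bound on $\sum r_2(n)^2$; the main care will be needed in the final bookkeeping between the $W$-trick parameters and the target exponent $3+C/\log\log N$, since the bound cannot afford more than sub-polynomial slack in the $W$-factor.
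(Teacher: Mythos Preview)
Your argument is correct. It differs from the paper's in the way the fourth moment is unpacked: the paper writes $\|\hat\phi\|_4^4$ as $\sum_k\bigl|\sum_{m-n=k}\phi(m)\bar\phi(n)\bigr|^2$, passes to the parametrisation $x^2-y^2=Wk$, and controls each $k\neq 0$ by the \emph{pointwise} divisor bound $d(k)\leq N^{O(1/\log\log N)}$ (this is what produces the exponent $3+C/\log\log N$ directly). You instead pass to $y_1^2+y_2^2=y_3^2+y_4^2$ and invoke the \emph{average} estimate $\sum_{n\leq M}r_2(n)^2\ll M\log M$. Your route gives the sharper main term $N^3\log N$ but picks up $W^3$ rather than the paper's single $W$; since $W\leq\exp(O(\sqrt{\log N}))$ is sub-$N^{C/\log\log N}$, both losses are harmless and the end results coincide. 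Your approach is essentially the classical fourth-moment bound (Hua's lemma for squares) transported through the $W$-trick, which is a perfectly natural alternative.

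One small slip: from \eqref{NDefn} one has $(N-1)W\leq Y^2<NW$, so $Y^2=NW+O(W)$ rather than $O(1)$; this does not affect $Y^6\ll(NW)^3$. Also, the inequality in your first display deserves one line of justification: it follows by writing $\int_\T|\hat\phi|^4=\sum_n\bigl|\sum_{n_1+n_2=n}\phi(n_1)\phi(n_2)\bigr|^2$ via Parseval and then bounding the inner sum in absolute value by $\sum_{n_1+n_2=n}\nu(n_1)\nu(n_2)$.
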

\begin{proof}
The left hand side is equal to 
\begin{align*}
\|\hat \phi\|_4^4 
&=\left\| \sum_{m,n} \phi(m)\bar{\phi}(n) e\left(\alpha(m-n)\right) \right\|_2^2
\leq \sum_{|k|\leq N} \left| \sum_{\substack{m,n\\ m-n=k}}\phi(m)\bar{\phi}(n)\right|^2.
\end{align*}
The contribution to the sum over $k$ from $k=0$ is 
\begin{align*}
\leq \left(\sum_n |\phi(n)|^2\right)^2
\leq \left(\sum_n \nu(n)^2\right)^2
\ll \left(Xb_1^{-1}N\right)^2\ll WN^3,
\end{align*}
by  \eqref{NuDef}, \eqref{NDefn} and \eqref{eqn:WhooperSwan}.
To handle the contribution from $k\neq 0$, 
the definition of $\nu$ makes it clear that 
$$
\sum_{\substack{m,n\\ m-n=k}}\phi(m)\bar{\phi}(n)=
\sum_{\substack{x,y\leq X/b_1\\ x^2\equiv y^2\equiv -b_2\bmod{W}\\ x^2-y^2=Wk}}\phi\left(\frac{x^2+b_2}{W}\right)\bar{\phi}\left(\frac{y^2+b_2}{W}\right)
$$
For fixed non-zero $k$ there are clearly at most $d(k)$ choices for $x,y$. Hence Cauchy--Schwarz gives
\begin{align*}
\|\hat \phi\|_4^4 
\leq  \sum_{0<|k|\leq N} d(k) 
\hspace{-0.2cm}
\sum_{\substack{x,y\leq X/b_1\\ x^2\equiv y^2\equiv -b_2\bmod{W}\\ x^2-y^2=Wk}} 
\hspace{-0.2cm}
\nu\left(\frac{x^2+b_2}{W}\right)^2\nu\left(\frac{y^2+b_2}{W}\right)^2 +O(WN^3)
\end{align*}
The standard estimate for the divisor function yields  $
d(k) \leq N^{O(1/\log\log N)},
$ for $0<|k|\leq N$. 
 Hence 
\begin{align*}
\|\hat \phi\|_4^4 
&\leq N^{O(1/\log\log N)}\left(
\sum_{\substack{x\leq X/b_1\\ x^2+b_2 \equiv 0\bmod{W}}} \nu\left(\frac{x^2-b_2}{W}\right)^2 \right)^2 +O(WN^3)\\
&\leq N^{O(1/\log\log N)}\left(Xb_1^{-1}N \right)^2 +O(WN^3)\\ 
&\leq WN^{3+ O(1/\log\log N)},
\end{align*}
as before.
Finally, since  $X/b_1 \geq X^{1/2}$, it follows from 
 \eqref{Wbound} that 
$W$ is at most $N^{O(1/\log\log N)}$.
This concludes the proof of the lemma. 
\end{proof}

Define the region
$$
\cR_\delta=\left\{\alpha\in \TT : |\hat \phi(\alpha)|>\delta N \right\},
$$
for any  $\delta\in (0,1)$.
Proposition \ref{thm:RestrictionBound} is a consequence of the following result.

\begin{lemma}\label{lem:measure}
For any $\delta\in (0,1)$ and any $\eps>0$ there exists a constant $C_\eps$ depending only $\eps$ such that 
$$
\meas( \cR_\delta) \leq \frac{C_\eps}{\delta^{4+\eps} N}.
$$
\end{lemma}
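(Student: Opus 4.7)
The plan is to adapt the level-set argument of Bourgain \cite[\S 4]{bourgain89} that underlies his $L^p$-restriction estimate for the squares, using the fourth moment bound of Lemma~\ref{lem:4th-moment} as the main input. First I would discretize: since $\hat\phi$ is a trigonometric polynomial with frequencies in $[N]$, it cannot oscillate on scales smaller than $1/N$, and a standard mean-value argument gives $\meas(\cR_\delta) \ll K/N$, where $K$ denotes the maximum cardinality of a $1/N$-separated subset $\{\alpha_j\}_{j=1}^K$ of $\cR_\delta$. It therefore suffices to show $K \ll_\eps \delta^{-4-\eps}$.

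Next I would dualize. Choose unimodular $\epsilon_j \in \CC$ with $\epsilon_j \hat\phi(\alpha_j) = |\hat\phi(\alpha_j)| \geq \delta N$ and set $F(n) := \sum_{j=1}^K \epsilon_j e(\alpha_j n)$; exchanging the order of summation,
\[
K\delta N \leq \sum_{j=1}^K \epsilon_j\,\hat\phi(\alpha_j) = \sum_n \phi(n) F(n) \leq \sum_n \nu(n)|F(n)|.
\]
Applying H\"older's inequality with exponents $(4/3,4)$ yields
\[
K\delta N \leq \|\nu\|_{4/3}\Bigl(\sum_{n \in \supp\nu}|F(n)|^4\Bigr)^{1/4}.
\]
The first factor is computed directly from \eqref{NuDef}, \eqref{eqn:WhooperSwan} and \eqref{Wbound}. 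For the second, expanding $|F|^4$ and summing over $n \in \supp\nu$ produces the $4$-tuple sum
\[
\sum_{j_1,j_2,k_1,k_2}\epsilon_{j_1}\epsilon_{j_2}\bar\epsilon_{k_1}\bar\epsilon_{k_2}\,\widehat{1_{\supp\nu}}(\alpha_{k_1}+\alpha_{k_2}-\alpha_{j_1}-\alpha_{j_2}),
\]
which via a $TT^*$-type identification is controlled by $\|\hat\psi\|_4^4$ for an auxiliary majorant $\psi$ with $|\psi| \leq \nu$; Lemma~\ref{lem:4th-moment} then provides the required bound. Putting these estimates together gives $K \ll \delta^{-4}N^{O(1/\log\log N)}$.

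The main obstacle, and the final step, is to absorb the subpolynomial factor $N^{O(1/\log\log N)}$ into the allowed $\delta^{-\eps}$ slack. I would split on the size of $\delta$: when $\delta \leq N^{-C/(\eps\log\log N)}$, one has $N^{O(1/\log\log N)} \leq \delta^{-\eps}$ and the previous estimate already suffices. For larger $\delta$, I would either iterate the argument above feeding back the improved level-set bound as fresh input (Bourgain's bootstrap), reducing the subpolynomial factor in finitely many passes, or alternatively sharpen the proof of Lemma~\ref{lem:4th-moment} by exchanging the pointwise divisor bound $d(k) \ll N^{O(1/\log\log N)}$ for the averaged estimate $\sum_{|k|\leq N}d(k) \ll N\log N$, yielding only a $\log N$ loss that is readily swallowed by $\delta^{-\eps}$ in this range.
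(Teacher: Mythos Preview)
Your proposal has two genuine gaps.

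First, the ``$TT^*$-type identification'' is not what you claim. After H\"older with exponents $(4/3,4)$ you are left with
\[
\sum_{n\in\supp\nu}|F(n)|^4
=\sum_{j_1,j_2,k_1,k_2}\epsilon_{j_1}\epsilon_{j_2}\bar\epsilon_{k_1}\bar\epsilon_{k_2}\,\widehat{1_{\supp\nu}}(\alpha_{j_1}+\alpha_{j_2}-\alpha_{k_1}-\alpha_{k_2}),
\]
a $K^4$-fold sum of values of $\widehat{1_{\supp\nu}}$. This is not of the form $\|\hat\psi\|_4^4$ for any $\psi$, and Lemma~\ref{lem:4th-moment} does not bound it. (Note also that $1_{\supp\nu}$ is \emph{not} majorised by $\nu$: for small $n$ one has $\nu(n)\asymp 1/\sigma(b_2)\ll 1$.) Even if you patched this by rescaling, you would only recover the crude bound $K\ll\delta^{-4}N^{O(1/\log\log N)}$ which the one-line application of Lemma~\ref{lem:4th-moment} already gives; the H\"older step buys nothing new.

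Second, and more fundamentally, neither of your absorption mechanisms covers the full range of $\delta$. Your option (b) asserts that a $(\log N)^{O(1)}$ loss is ``readily swallowed by $\delta^{-\eps}$'' when $\delta>N^{-C/(\eps\log\log N)}$, but this is false: for $\delta$ bounded away from $0$ (say $\delta=\tfrac12$) one has $\delta^{-\eps}=O(1)$ while $\log N\to\infty$. The target bound $\meas(\cR_\delta)\leq C_\eps/(\delta^{4+\eps}N)$ has a constant \emph{independent of $N$}, so no argument whose only arithmetic input is a moment estimate with an $N$-dependent loss can close the gap in this regime. Your option (a), ``Bourgain's bootstrap'', is too vague: there is nothing in your set-up to iterate, since each pass through H\"older plus fourth moment reproduces the same loss.

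The paper's route is genuinely different in the large-$\delta$ regime. It applies Cauchy--Schwarz (not H\"older $(4/3,4)$) to reach $\delta^2NR^2\ll\sum_{r,r'}|\hat\nu(\theta_r-\theta_{r'})|$, then H\"older with an exponent $\gamma>2$, and crucially invokes the \emph{pointwise} major/minor arc estimates for $\hat\nu$ from \S\ref{sec:ExpSums} (rather than any moment bound) to reduce to Bourgain's arithmetic sum
\[
\sum_{r,r'}G(\theta_r-\theta_{r'}),\qquad G(\alpha)=\sum_{q\leq Q}\sum_{(a,q)=1}q^{-\gamma/2}\bigl(1+N|\sin(\alpha-\tfrac{a}{q})|\bigr)^{-\gamma}.
\]
It is this structural information about $\hat\nu$ on major arcs, combined with Bourgain's combinatorial treatment of $G$, that produces a bound uniform in $N$; Lemma~\ref{lem:4th-moment} is used only to dispose of the small-$\delta$ range.
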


Although standard, let us spell out how Lemma \ref{lem:measure} implies Proposition~\ref{thm:RestrictionBound}. 
Breaking into dyadic intervals it follows from Lemma \ref{lem:measure}  that 
\begin{align*}
\|\hat \phi\|_p^p 
&\leq 
\sum_{j\geq 0} \left(\frac{N}{2^{j-1}}\right)^p
\meas\left\{\alpha\in \TT: 
\frac{N}{2^j}<| \hat \phi(\alpha)|\leq \frac{N}{2^{j-1}}\right\}\\
&\leq C_\eps 2^p N^{p-1}\sum_{j\geq 0} (2^{4+\eps-p})^j.
\end{align*}
Taking $\eps = \eps(p) >0$ sufficiently small we can ensure that this geometric series converges, which thereby gives Proposition \ref{thm:RestrictionBound}. 

We begin the proof of Lemma \ref{lem:measure} with a cruder upper bound based on 
our fourth moment estimate. Thus Lemma \ref{lem:4th-moment} implies that there exists an absolute constant $C > 0$ such that
$$
(\delta N)^4 \meas (\cR_\delta) \leq \|\hat \phi\|_4^4 \leq  N^{3 + C/\log\log N}, 
$$
whence 
$$
\meas (\cR_\delta)  \leq \frac{N^{C/\log\log N}}{\delta^4 N}.
$$
In particular,  Lemma \ref{lem:measure} follows from this bound unless 
\begin{equation}\label{eq:delta-lower}
\delta>N^{-C(\eps\log \log N)^{-1}},
\end{equation}
as we henceforth  assume.

As in the proof of Lemma \ref{eq:bounded}, let $\tau=\tfrac{1}{100}$.  To each fraction $a/q$, with  $0\leq a<q\leq N^\tau$ and  $(a,q)=1$, we associate the major arc
$$
\mathfrak{M}(q,a)=\{\alpha\in \TT: |\alpha-a/q|\leq N^{-1+\tau}\}.
$$
We let $\mathfrak{M}$ denote the union of all major arcs. 
As in \eqref{minorarcbound} we have 
\begin{equation}\label{eq:nu-minor}
\hat \nu(\alpha) \ll N^{1-\tau/4}   \quad \text{if $\alpha\not\in \mathfrak{M}$.}
\end{equation}
On the major arcs we can conclude from \eqref{eq:parrot} and Lemma \ref{lem:estimate-SI} that 
\begin{equation}\begin{split}
\label{eq:nu-major}
\abs{\hat{\nu}(\alpha)} 
&\leq  \frac{1}{\sigma(b_2)} \sum_{\substack{z \in [W]\\ z^2 +b_2 \equiv 0 \bmod W}} q^{-1} |S_q(a, z)I(\alpha - \tfrac{a}{q}) | + O(N^{1/2+2\tau})\\
&\leq \frac{2}{\sqrt{q}}\min\left\{N,\|\alpha-\tfrac{a}{q}\|^{-1}\right\}  +O(N^{1/2+2\tau})   
\end{split}
\end{equation}
if $\alpha\in \mathfrak{M}(q,a)$ for some $a,q$ satisfying 
$0\leq a< q\leq N^\tau$ and  $(a,q)=1$.

Returning to our analysis of  $\hat \phi$, 
let $\theta_1,\dots,\theta_R$ be $1/N$-spaced points in $\TT$ such that 
$
|\hat \phi(\theta_r) |>\delta N,$ for $1\leq r\leq R$.
In order to prove Lemma \ref{lem:measure} it will suffice to show that
\begin{equation}\label{eq:flight}
R\leq \frac{C_\eps}{\delta^{4+\eps}}.
\end{equation}
To see this,  
let $R\geq 1$ be the largest integer such that there exists a sequence of $1/N$-spaced points 
$\theta_1,\dots,\theta_R$ in $\mathcal{R}_\delta$. Then every $\theta\in \mathcal{R}_\delta$ necessarily satisfies 
$\|\theta-\theta_r\|<1/N$ for some $r$. This implies that the measure of $\mathcal{R}_\delta$ does not exceed $2R/N$, as required.

To prove \eqref{eq:flight}, we 
 let $a_n\in \CC$ be such that $|a_n|\leq 1$ and 
$\phi(n)=a_n\nu(n)$ for integers  $1\leq n\leq N$. Furthermore,  
 let $c_1,\dots,c_R\in \CC$ be such that $|c_r|=1$ and 
$$
c_r\hat \phi (\theta_r) =|\hat \phi(\theta_r)| 
$$
for $1\leq r\leq R$.  Then it follows from the Cauchy--Schwarz inequality that
\begin{align*}
\delta^2 N^2 R^2
&\leq \left(\sum_{1\leq r\leq R} |\hat \phi(\theta_r)|\right)^2\\
&= \left(\sum_{1\leq r\leq R} c_r\sum_n a_n \nu(n) e(n\theta_r)\right)^2\\
&\ll N \sum_{n} \nu(n)   \left|\sum_{1\leq r\leq R} c_re(n\theta_r)\right|^2, 
\end{align*}
so that 
\begin{align*}
\delta^2 N R^2
&\ll \sum_{1\leq r,r'\leq R} \left|\hat \nu(\theta_r-\theta_{r'})\right|.
\end{align*}

Now let $\gamma>2$ be fixed. It follows from H\"older's inequality that 
$$
\delta^{2\gamma} N^{\gamma} R^2\ll 
\sum_{1\leq r,r'\leq R} \left|\hat \nu(\theta_r-\theta_{r'})\right|^\gamma.
$$
Let us put $\theta=\theta_r-\theta_{r'}$ for given $r\neq r'$.
It follows from
 \eqref{eq:delta-lower} and $\eqref{eq:nu-minor}$ that 
$|\hat \nu(\theta)|$ is negligible when $\theta \not\in \mathfrak{M}$. When 
$\theta \in \mathfrak{M}$ we use the major arc approximation recorded in \eqref{eq:nu-major}
to conclude that 
$$
\hat \nu(\theta) \ll \frac{N}{{\sqrt{q}}}(1+N \|\theta-\tfrac{a}{q}\|)^{-1}  +O(N^{1/2+2\tau}), 
$$
for some $a,q$ satisfying 
$0\leq a< q\leq N^\tau$ and  $(a,q)=1$. 
Now let $Q=\delta^{-5}$. If $q\geq Q$ then we simply take the upper bound
$$
\frac{N}{{\sqrt{q}}}(1+N \|\theta-\tfrac{a}{q}\|)^{-1}
\ll\delta^{5/2} N.
$$
Recalling that $\delta<1$ satisfies the lower  bound in \eqref{eq:delta-lower}, 
this leads to the conclusion that 
$$
\delta^{2\gamma} R^2\ll  
 \sum_{q\leq Q}\sum_{\substack{a\bmod{q}\\ (a,q)=1}}\
\sum_{\substack{1\leq r,r'\leq R}}
\frac{q^{-\gamma/2}}{(1+N\|\theta_r-\theta_{r'}-\tfrac{a}{q}\|)^\gamma}.
$$
Hence
$$
\delta^{2\gamma} R^2\ll 
\sum_{\substack{1\leq r,r'\leq R}}
G(\theta_r-\theta_{r'}),
$$
where 
$$
G(\alpha):= \sum_{q\leq Q}\sum_{\substack{a\bmod{q}\\ (a,q)=1}}\
q^{-\gamma/2}(1+N|\sin (\alpha-\tfrac{a}{q})|)^{-\gamma}
$$
and
we recall that $\theta_1,\dots,\theta_R$ is a sequence of $1/N$-spaced points in $\TT$.

We have finally arrived at  exactly the same expression as that found in \cite[Eq.~(4.16)]{bourgain89}, but with $N^2$ replaced by $N$.  Bourgain's argument therefore  directly 
applies
 and leads us to the desired bound \eqref{eq:flight} for $R$.

\section{The $K$-trivial count}\label{sec:KTriv}

The purpose of this section is to show that $\nu = \nu_{\vb}$ saves something over the trivial estimate when counting solutions to \eqref{MainEqn} contained in a proper subspace.

\begin{lemma}\label{PilaBound}
Let $\varepsilon>0$ and let $K \subset \Q^s$ denote a finite union of at most $k$ proper subspaces of the hyperplane 
$$
c_1y_1 + \dots + c_s y_s = 0.
$$
Then the number of $\vx \in [X]^s$ with $(x_1^2, \dots, x_s^2) \in K$ is  $O_{k, s,\varepsilon}(X^{s-\frac{5}{2}+\varepsilon}).$
\end{lemma}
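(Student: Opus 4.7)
The plan is to combine a slicing argument with Pila's bound for integer points on low-degree algebraic curves. By the union bound it suffices to prove the claim for a single proper subspace $V$ of the hyperplane $H = \{\vc \cdot \vy = 0\}$, with the factor of $k$ absorbed into the implied constant. Since $V$ has codimension at least two in $\Q^s$, there exists a linear form $\vd$, not a scalar multiple of $\vc$, that vanishes on $V$. After subtracting a suitable rational multiple of $\vc$ from $\vd$ I may assume $\vd$ has at least one zero coordinate; in particular $\vc$ and $\vd$ remain linearly independent, so I can select three indices $\{i_1, i_2, i_3\} \subset \{1, \dots, s\}$ for which the $2 \times 3$ matrix whose columns are $(c_{i_j}, d_{i_j})$, $j = 1,2,3$, has rank two.

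I then fix the remaining $s - 3$ coordinates of $\vx$ in at most $O(X^{s-3})$ ways. For each such choice, the triple $(x_{i_1}, x_{i_2}, x_{i_3})$ is forced to satisfy two affine diagonal quadric equations in three variables whose leading homogeneous parts are linearly independent (by the rank condition). B\'ezout's theorem then implies that generically their intersection is an algebraic curve $C \subset \mathbb{A}^3$ of degree at most four. Pila's bound asserts that an absolutely irreducible affine algebraic curve of degree $d \geq 2$ in $\mathbb{A}^n$ contains $O_{d, n, \varepsilon}(X^{1/d + \varepsilon})$ integer points of height at most $X$; applied to each irreducible component of $C$ of degree at least two, this gives $O(X^{1/2 + \varepsilon})$ points per fiber. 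Multiplying by the $O(X^{s - 3})$ choices of fixed coordinates yields the target bound $O(X^{s - 5/2 + \varepsilon})$.

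The main obstacle will be handling the degenerate fibers, namely those in which $C$ becomes higher-dimensional or acquires a linear component, since Pila's bound on a line in $\mathbb{A}^3$ weakens to $O(X)$. Such degenerations force the fixed base coordinates to satisfy auxiliary polynomial relations: for instance, an affine diagonal quadric in three variables with all coefficients nonzero is reducible only when its constant term vanishes, and this is itself a polynomial equation in the fixed base coordinates. Consequently, the locus of degenerate base points is contained in a proper subvariety of the $(s - 3)$-dimensional base, of strictly smaller dimension, and a short dimension count confirms that the total contribution from these degenerate fibers is at most $O(X^{s - 3 + \varepsilon})$. Since $s - 3 < s - 5/2$, this is absorbed by the main bound.
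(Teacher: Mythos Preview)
Your slicing strategy is a reasonable outline, but the treatment of degenerate fibres has a genuine gap. You assert that the base points for which the fibre curve $C$ acquires a linear component lie in a \emph{proper} subvariety of $\mathbb{A}^{s-3}$, justified only by the remark that a single diagonal quadric with all coefficients nonzero becomes reducible exactly when its constant term vanishes. But linear components of $C = Q_1 \cap Q_2$ can occur even when both surfaces $Q_i$ are irreducible, and nothing in your rank-two criterion for $I$ rules this out for generic (or indeed all) fibres. Concretely, take $s=5$, $\vc = (1,-1,1,1,-2)$, $\vd = (-1,1,0,0,0)$ (already with a zero coordinate, as you arranged), and $I = \{1,2,3\}$. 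The $2\times 3$ coefficient matrix has rank two, yet the fibre over any $(x_4,x_5)$ is cut out by $x_1^2 = x_2^2$ together with $x_3^2 = 2x_5^2 - x_4^2$, which over $\overline{\Q}$ is a union of four lines whenever $2x_5^2 \neq x_4^2$. Thus the degenerate locus is Zariski-dense in the base and your dimension count collapses. The lemma does hold in this example---the lines are defined over $\Q$ only when $2x_5^2 - x_4^2$ is a perfect square, which happens for $O(X^{1+\varepsilon})$ base points---but that requires an entirely different argument. A more careful choice of $I$ might rescue the approach, but you have neither indicated how to make such a choice nor shown that one always exists.

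The paper sidesteps all of this by working globally rather than fibrewise: it applies Pila's bound for varieties (not just curves) directly to the codimension-two intersection $V = \{\sum_i c_i x_i^2 = \sum_i d_i x_i^2 = 0\} \subset \mathbb{P}^{s-1}$, after verifying in one stroke that no irreducible component of $V$ of top dimension $s-3$ is linear. Any linear space contained in $V$ lies inside the smooth quadric $\sum_i c_i x_i^2 = 0$ (smooth because every $c_i\neq 0$), and a smooth quadric in $\mathbb{P}^{s-1}$ contains no linear subspace of dimension exceeding $(s-2)/2$, which is strictly less than $s-3$ once $s\geq 5$. This single observation replaces the delicate fibrewise degeneration analysis that your approach would need.
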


\begin{proof}  
The set $K$ is a union of at most $k$ subspaces of the form 
$$
\set{\vy \in \Q^s : \vc \cdot \vy = 0 \text{ and } \vd \cdot \vy = 0},
$$
where $\vd \in \Q^s$ is a  fixed $s$-tuple which is not proportional to $\vc$.  
Our task is therefore to count solutions $\vx \in [X]^s$ to the simultaneous quadrics 
$$
c_1 x_1^2 + \dots + c_s x_s^2 = d_1 x_1^2 + \dots + d_s x_s^2 = 0.
$$
This pair of equations defines a projective subvariety $V\subset \mathbb{P}_\Q^{s-1}$ of codimension $2$.  Let $\Lambda$ be any  $k$-plane contained in $V$. Then $\Lambda$ is contained in the 
 non-singular quadric hypersurface
 $
 c_1x_1^2+\dots+c_s x_s^2=0,
 $ 
 in $\mathbb{P}_\Q^{s-1}$. It follows that $k\leq (s-2)/2$ (see Harris \cite[Chap.~22]{harris2013algebraic}, for example). Since $s\geq 5$ we conclude that $V$ does not contain any linear component of maximal dimension. 
Thus we may apply work of Pila \cite{pila1995density} to each of the irreducible components of $V$, 
concluding that 
the number of  solutions $\vx \in [X]^s$ to the equations defining $V$ is 
$O_{s,\varepsilon}(X^{s-3+\frac{1}{2}+\varepsilon})$, for any $\varepsilon>0$.
This completes the proof of the lemma.
\end{proof}

\begin{corollary}\label{cor:8.2}
Let $K \subset \Q^s$ denote a finite union of at most $k$ proper subspaces of the hyperplane \eqref{MainEqn}.  Then, in the terminology of Definition~\ref{saves eta}, the majorant $\nu = \nu_{\vb}$ saves $1/5$ on the $K$-trivial solutions.
\end{corollary}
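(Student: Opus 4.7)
The plan is to parametrize $\supp(\nu)$ via the underlying integer square-roots and then reduce the $K$-trivial count to Lemma \ref{PilaBound}. From \eqref{NuDef}, $\nu(n)$ is nonzero exactly when $n = (u^2+b_2)/W$ for some integer $u \in [X/b_1]$ satisfying $u^2 \equiv -b_2 \pmod{W}$, in which case $\nu(n) = 2u/\sigma(b_2) \leq 2(X/b_1)/\sigma(b_2)$. Writing $n_i(\vu) := (u_i^2+b_2)/W$, we thus obtain
\begin{equation*}
\sum_{\vn \in K} \prod_{i=1}^s \nu(n_i) \leq \Bigl(\frac{2(X/b_1)}{\sigma(b_2)}\Bigr)^s \cdot \#\set{\vu \in [X/b_1]^s : (n_1(\vu), \ldots, n_s(\vu)) \in K}.
\end{equation*}

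The key step is to translate the constraint $(n_1(\vu), \ldots, n_s(\vu)) \in K$ into a linear constraint on $(u_1^2, \ldots, u_s^2)$. We write each of the at most $k$ proper subspaces of $K$ as $\set{\vy : \vc \cdot \vy = \vd \cdot \vy = 0}$ with $\vd$ not proportional to $\vc$. Assuming (as in the hypothesis of Theorem \ref{thm:main}) that each such subspace contains the diagonal \eqref{diagonal}, we have $\sum_i d_i = 0$; combined with $\sum_i c_i = 0$, the substitution $n_i = (u_i^2+b_2)/W$ kills the affine constants, so $(n_1(\vu), \ldots, n_s(\vu)) \in K$ if and only if $(u_1^2, \ldots, u_s^2) \in K$. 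Lemma \ref{PilaBound} applied with $X/b_1$ in place of $X$ then bounds the count by $O_{k,s,\varepsilon}((X/b_1)^{s-5/2+\varepsilon})$, yielding
\begin{equation*}
\sum_{\vn \in K} \prod_{i=1}^s \nu(n_i) \ll_{k,s,\varepsilon} \frac{(X/b_1)^{2s-5/2+\varepsilon}}{\sigma(b_2)^s}.
\end{equation*}

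The last step is to verify the claimed saving $\eta = 1/5$, which reduces to arithmetic. From \eqref{eqn:WhooperSwan} and \eqref{NDefn}, $\norm{\nu}_1 \sim N \sim (X/b_1)^2/W$, so $\norm{\nu}_1^s N^{-6/5} \sim (X/b_1)^{2s-12/5} W^{-(s-6/5)}$, and the ratio of our upper bound to $\norm{\nu}_1^s N^{-6/5}$ is bounded up to constants by
\begin{equation*}
\frac{(X/b_1)^{-1/10+\varepsilon} W^{s-6/5}}{\sigma(b_2)^s}.
\end{equation*}
Since $b_1 \leq X^{1/2}$ and $W \leq \exp(C\sqrt{\log X})$ by \eqref{Wbound}, this is at most $X^{-1/20+\varepsilon/2} \exp(Cs\sqrt{\log X})$, which tends to $0$ as $X \to \infty$ because $\log X$ dominates $\sqrt{\log X}$; for bounded $X$ the inequality is absorbed into the implied constant. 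The main, though not deep, obstacle is the linear-algebra translation in the second paragraph—the diagonal containment of each subspace is precisely what causes the affine shift $b_2 \sum_i d_i/W$ to vanish under the substitution, enabling direct use of the homogeneous Pila bound of Lemma \ref{PilaBound}.
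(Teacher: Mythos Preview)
Your proof is correct and follows essentially the same route as the paper's: parametrize $\supp(\nu)$ by the underlying square-roots, invoke Lemma~\ref{PilaBound} with $X/b_1$ in place of $X$, and then absorb the powers of $W$ using \eqref{Wbound}. You are in fact more careful than the paper on one point: the passage from $(n_1,\ldots,n_s)\in K$ to $(u_1^2,\ldots,u_s^2)\in K$ does require that each constituent subspace contain the diagonal (so that $\sum_i d_i=0$ and the affine shift by $b_2$ vanishes), a hypothesis present in Theorem~\ref{thm:main} but silently used rather than restated in the paper's proof of Corollary~\ref{cor:8.2}.
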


\begin{proof}
We wish to estimate
$
\sum_{\vn \in K} \prod_{i=1}^s \nu(n_i).
$
If $\vn$ is counted by this sum, then on setting $x_i = \sqrt{Wn_i - b_2}$, we obtain integers $x \in [X/b_1]$ such that $(x_1^2, \dots, x_s^2) \in K$.  Combining  the crude estimate $\nu(n_i) \ll X/b_1$ with  Lemma \ref{PilaBound}, we deduce that
\begin{align*}
\sum_{\vx \in K} \prod_{i=1}^s \nu(x_i) & \ll_\eps (Xb_1^{-1})^{2s - \frac{5}{2} + \eps}
\ll_\eps (NW)^{s-\frac{5}{4}+\frac{\eps}{2}}\ll_\eps N^{s-\frac{5}{4}+\eps},
\end{align*}
since $W^{s-\frac{5}{4}+\frac{\eps}{2}} \ll_\eps N^{\frac{\eps}{2}}$ by \eqref{Wbound}.
This completes the proof.
\end{proof}

\section{Proof of Theorem \ref{thm:main}}\label{sec:MainThmProof}

Let $A \subset [X]$ with $|A| = \delta X$ and such that $A$ contains only $K$-trivial solutions to \eqref{eqn:heron}.  We may assume that $\delta \geq C(\log X)^{-1}$ where $C$ is the absolute constant appearing in Lemma \ref{lem:MandarinDuck}, otherwise the proof of Theorem \ref{thm:main} is complete.  Under this assumption, Lemma \ref{lem:MandarinDuck} yields the existence of a $w$-smooth number $b_1 \leq X^{1/2}$ and $b_2 \in [W]$ with $(b_2, W) = 1$ such that
$$
\sum_n 1_{A_{\vb}}\brac{n} \nu_{\vb} (n) \gg \delta^2 N_{\vb},
$$
where
$
A_{\vb} =\set{ n \in \Z : b_1^2(Wn -b_2) = x^2 \text{ for some } x \in A}
$   
and $N_\vb$ is given by \eqref{NDefn}. 
Given such a choice of $\vb$, we have established that
\begin{itemize}
\item $\nu_{\vb}$ satisfies a restriction estimate at any exponent $p > 4$ (Prop.~\ref{thm:RestrictionBound});
\item $\nu_{\vb}$ has Fourier decay of level
$
\theta \ll 1/\sqrt{w}
$ (Lemma \ref{eq:bounded});
\item  $\nu_{\vb}$ saves $1/5$ on the $K$-trivial solutions (Cor.~\ref{cor:8.2}).
\end{itemize}

Incorporating these facts into Proposition \ref{BourbakiProp}, we deduce that
$$
\sum_n 1_{A_{\vb}}(n) \nu_{\vb}(n) \ll_{\vc,k,\eps} \frac{N_{\vb}}{\min\set{\log\log w,\ \log N_{\vb}}^{s-2-\eps}}.
$$
Recalling \eqref{Ww}, \eqref{Wbound} and \eqref{NDefn}, we finally deduce that
$$
\delta \ll_{\vc, k, \eps} (\log\log \log X)^{-(\frac{s}{2} - 1 - \eps)}.
$$
This completes the proof of Theorem \ref{thm:main}.

\end{document}